\newcommand{\bsm}{\left(\begin{smallmatrix}}
\newcommand{\esm}{\end{smallmatrix}\right)}
\newtheorem{theorem}{Theorem}[section]
\newtheorem{corollary}[theorem]{Corollary}
\newtheorem{proposition}[theorem]{Proposition}
\theoremstyle{definition}
\newtheorem{remark}[theorem]{Remark}
\newtheorem*{claim*}{Claim}
\newcommand{\Z}{\mathbb{Z}}
\newcommand{\Q}{\mathbb{Q}}
\newcommand{\R}{\mathbb{R}}
\newcommand{\C}{\mathbb{C}}
\newcommand{\Id}{\operatorname{Id}}
\newcommand{\nd}{\operatorname{nd}}
\newcommand{\ks}{\operatorname{ks}}
\newcommand{\Aut}{\operatorname{Aut}}
\newcommand{\coker}{\operatorname{coker}}
\def\ol{\overline}
\newcommand{\unaryminus}{\scalebox{0.75}[1.0]{\( - \)}}
\newcommand{\BTOPSpin}{\operatorname{BTOPSpin}}
\newcommand{\BSTOP}{\operatorname{BSTOP}}
\begin{document}
\title{Locally flat simple spheres in~$\C P^2$}
\begin{abstract}
The fundamental group of the complement of a locally flat surface in a $4$-manifold is called the knot group of the surface.
In this article we prove that two locally flat $2$-spheres in~$\C P^2$ with knot group $\Z_2$ are ambiently isotopic if they are homologous.
This combines with work of Tristram and Lee-Wilczy\'{n}ski,  as well as the classification of~$\Z$-surfaces, to complete a proof of the statement:
a class~$d \in H_2(\C P^2) \cong \Z$ is represented by a locally flat sphere with abelian knot group if and only if~$|d| \in \lbrace 0,1,2\rbrace$; and this sphere is unique up to ambient isotopy.
\end{abstract}

\author[A.~Conway]{Anthony Conway}
\address{The University of Texas at Austin, Austin TX 78712}
\email{anthony.conway@austin.utexas.edu}
\author[P.~Orson]{Patrick Orson}
\address{California Polytechnic State University, San Luis Obispo, CA 93407}
\email{patrickorson@gmail.com}

\maketitle

\section{Introduction}

We call a locally flat surface~$F \subseteq \C P^2$ with~$\pi_1(\C P^2 \setminus F)$ abelian (and therefore cyclic)~ \emph{simple}. In particular, we call a locally flat sphere with~$\pi_1(\C P^2 \setminus F)=\Z_2$, a \emph{$\Z_2$-sphere}. Throughout the article submanifolds are assumed to be locally flat, not necessarily moreover smooth.

\medbreak

The main theorem of this article is the following.
\begin{theorem}
\label{thm:Main}
Homologous $\Z_2$-spheres in $\C P^2$ are ambiently isotopic.
\end{theorem}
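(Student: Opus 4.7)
The plan is to follow the exterior surgery paradigm familiar from the $\Z$-surface classification. Let $F_1, F_2$ be homologous $\Z_2$-spheres, necessarily of class $d = \pm 2$ (forced by $H_1(\C P^2 \setminus F_i) \cong \Z_2$ and the fact that the meridian generates this group). After possibly composing with complex conjugation, assume $d = 2$, so the tubular neighborhoods $\nu F_i$ are disk bundles of Euler number $4$ with boundary the lens space $L(4,1)$. Let $N_i := \C P^2 \setminus \nu F_i$. The strategy is to first show $N_1 \cong N_2$ rel boundary, then extend this across the tubular neighborhoods to produce an ambient homeomorphism of $\C P^2$ carrying $F_1$ to $F_2$, and finally upgrade it to an ambient isotopy.

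First I would fix a smooth reference $\Z_2$-sphere $F_0 \subset \C P^2$, for instance a smooth complex conic, with exterior $N_0$, and show that each $N_i$ is homeomorphic to $N_0$ rel boundary. The invariants to compute and match are: the $\Z[\Z_2]$-equivariant intersection form on $H_2(N_i;\Z[\Z_2])$; the identification of $\partial N_i$ with $L(4,1)$; the Kirby--Siebenmann invariant; and, via Poincar\'e--Lefschetz duality, the full equivariant homotopy type. With $\pi_1(N_i) = \Z_2$ and $[F_i] = 2[\C P^1]$ fixed, a direct computation over $\Z[\Z_2]$ together with an algebraic classification of symmetric unimodular forms of the appropriate rank and discriminant should force the form to be isomorphic to that of $N_0$. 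Since $\mathrm{Wh}(\Z_2) = 0$, any homotopy equivalence $N_i \to N_0$ rel boundary produced from this data is automatically simple.

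The central step is to realize such a simple homotopy equivalence by a homeomorphism rel boundary. I would run the topological surgery exact sequence for $(N_0, \partial N_0)$, using that $\Z_2$ is a good group so the Freedman--Quinn disk embedding theorem applies, and exploiting the tractability of $L$-groups and normal invariants over $\Z[\Z_2]$. Composing the resulting homeomorphisms $N_1 \to N_0 \to N_2$, uniqueness of topological tubular neighborhoods of locally flat $2$-spheres extends the result to an ambient homeomorphism $\phi\colon \C P^2 \to \C P^2$ with $\phi(F_1) = F_2$; any residual gluing ambiguity on $L(4,1)$ is absorbed by a self-homeomorphism of the boundary chosen at the outset. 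Finally, Perron--Quinn isotopy uniqueness, which says every orientation-preserving self-homeomorphism of $\C P^2$ is topologically isotopic to the identity, promotes $\phi$ to an ambient isotopy from $F_1$ to $F_2$.

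The main obstacle I expect is the surgery step: proving that the topological structure set $\mathcal{S}^{\mathrm{TOP}}(N_0, \partial N_0)$ contains a unique element, equivalently that the algebraic isomorphism of equivariant forms is realized geometrically. This requires controlling both the image of the surgery obstruction map in $L_5(\Z[\Z_2])$ and the set of normal invariants $[N_0/\partial N_0, G/\mathrm{TOP}]$, including pinning down the Kirby--Siebenmann component. Much of the bookkeeping will be $\Z_2$-equivariant, and the technical heart of the argument will be showing that the only difference between two exteriors is captured by a $\Z_2$-homology $h$-cobordism that surgery forces to be a product.
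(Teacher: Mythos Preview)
Your overall architecture (identify exteriors rel boundary, extend over tubes, then isotope) matches the paper's, but the mechanism you propose for the core step is genuinely different: you want to run the \emph{classical} Wall surgery sequence on a fixed model $(N_0,\partial N_0)$, whereas the paper uses Kreck's \emph{modified} surgery over the normal $1$-type. The paper never constructs a reference homotopy equivalence; instead it glues $X_0\cup_f -X_1$, shows this closed manifold is almost spin with $\pi_1=\Z_2$, checks it is $(B,p)$-nullbordant (via $\sigma$ and $\ks$), and then proves the modified obstruction in $\ell_5(\Z[\Z_2])$ is elementary by reducing to the singleton $\ell_5(\Z,-2)$.

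There are two concrete problems with your route. First, the step ``produce a homotopy equivalence $N_i\to N_0$ rel boundary from the algebraic data'' is the entire difficulty, and you have not indicated how to do it. The equivariant intersection form here lives on $H_2(\widetilde X)\cong \Z_-$, which is \emph{not} free over $\Z[\Z_2]$, and the form $-2(1-T)$ is neither unimodular over $\Z[\Z_2]$ nor even nonsingular over~$\Z$; so ``algebraic classification of symmetric unimodular forms'' does not apply. Even granting that the forms match, recovering the homotopy type rel boundary requires controlling the $k$-invariant in $H^3(\Z_2;\Z_-)\cong\Z_2$ and then realising the abstract isomorphism by a map---none of which is automatic. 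Modified surgery is designed precisely to bypass this: it needs only a bordism over the normal $1$-type, not a homotopy equivalence up front.

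Second, your final sentence misstates Perron--Quinn: it is \emph{not} true that every orientation-preserving self-homeomorphism of $\C P^2$ is isotopic to the identity. Complex conjugation is orientation-preserving on $\C P^2$ but acts as $-1$ on $H_2$, hence is not isotopic to $\Id$. What Perron--Quinn gives is that a self-homeomorphism is isotopic to the identity \emph{iff} it induces the identity on $H_2$. The paper has to work to arrange this: if the constructed $G$ acts as $-1$ on $H_2$, it goes back and re-runs the argument with the boundary glueing $f$ replaced by the involution $\tau$ of $L(4,1)$, whose disc-bundle extension flips the sign on $H_2(\overline\nu(F))$. Your proposal needs an analogous correction step.
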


With this, we deduce the following result about the classification of
simple spheres in~$\C P^2$. 
Theorem~\ref{thm:Main} is the final component of the result, the other components already being present in the literature, coming from various sources. 
Here, and throughout the paper, we fix an identification $H_2(\C P^2)\cong \Z$, by fixing $[\C P^1]$ as the generator.

\begin{theorem}
\label{thm:Classification}
A class $d\in H_2(\C P^2)$ is represented by a
simple sphere if and only if~$d \in \lbrace 0,\pm1,\pm2\rbrace$, and in these cases the simple sphere is unique up to ambient isotopy.
\end{theorem}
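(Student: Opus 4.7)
The plan is to decompose Theorem~\ref{thm:Classification} into three tasks---existence of a simple sphere for $|d| \leq 2$, non-existence for $|d| \geq 3$, and uniqueness in each realizable class---and then to assemble the result from Theorem~\ref{thm:Main} together with results already in the literature; the only substantial new input is Theorem~\ref{thm:Main} itself.

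For existence when $|d| \leq 2$, I would exhibit explicit simple spheres: for $d = 0$, a locally flat $2$-sphere inside a Euclidean chart of $\C P^2$ has knot group $\Z$; for $d = \pm 1$, the projective line $\C P^1 \subset \C P^2$ (and its complex conjugate) has complement $\C^2$ and hence trivial knot group; and for $d = \pm 2$, a smooth conic is a locally flat $2$-sphere whose complement has knot group $\Z_2$ (by the usual Zariski-type computation for smooth plane curves). In each case the knot group is cyclic, hence abelian, so these spheres are simple. For the converse direction $|d| \geq 3$, I would invoke the theorem of Lee-Wilczy\'{n}ski ruling out any locally flat $2$-sphere with cyclic knot group in such a class.

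For uniqueness, the key observation is that if $S \subset \C P^2$ is a simple sphere of degree $d$, then Alexander duality gives $H_1(\C P^2 \setminus S) \cong \Z/d\Z$ (with the convention $\Z/0 = \Z$), so the abelian knot group is pinned down by $d$: trivial when $|d| = 1$, isomorphic to $\Z_2$ when $|d| = 2$, and isomorphic to $\Z$ when $d = 0$. I would then split into three cases. When $|d| = 1$ the sphere has simply connected complement and uniqueness follows from work of Tristram. When $d = 0$ the sphere is a $\Z$-sphere and uniqueness follows from the classification of $\Z$-surfaces. When $|d| = 2$ the sphere is a $\Z_2$-sphere and uniqueness is precisely Theorem~\ref{thm:Main}.

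The main obstacle is of course Theorem~\ref{thm:Main}, which is the subject of the rest of the paper; once that is established, the only care needed in the assembly is to verify that Alexander duality truly pins down the knot group in each case, so that the three cited classification results together exhaust every possibility for a simple sphere in $\C P^2$.
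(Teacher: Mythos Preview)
Your decomposition into existence, non-existence, and uniqueness is exactly the paper's approach, and the mathematical content is correct. However, you have swapped two attributions: the non-existence of locally flat spheres for $|d|\ge 3$ is due to Tristram (not Lee--Wilczy\'{n}ski), while the uniqueness of simple spheres for $|d|=1$ is due to Lee--Wilczy\'{n}ski (not Tristram); the paper also uses a Mayer--Vietoris rather than an Alexander duality argument to compute $H_1$ of the exterior, but that is immaterial.
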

\begin{proof}
The class~$d \in H_2(\C P^2)$ is represented by a
simple sphere if and only if~$|d| \in \lbrace 0,1,2 \rbrace$: the class~$d=0$ is represented by any smoothly embedded sphere that bounds a smoothly embedded~$3$-ball,
the classes~$d=1$ is represented by~$\C P^1 \subseteq\C P^2$, the class~$d=2$ is represented by the degree two curve~$x^2+y^2+z^2=0$.
By a van Kampen argument in the cases~$d=0,1$, or a Zariski-van Kampen argument in the case $d=2$ (see~\cite{MR1506962}), the spheres are seen to be simple.
Reversing the orientation of these surfaces leads to representatives for the classes~$d=-1$ and~$d=-2$. 
Work of Tristram~\cite[Page~264]{Tristram} implies that these are the only classes represented by locally flat spheres.

If~$F \subseteq \C P^2$ is a
simple sphere representing $d \in H_2(\C P^2)$, then a Mayer-Vietoris argument confirms its exterior~$X:=\C P^2 \setminus \nu(F)$ must have fundamental group~$\pi_1(X) \cong H_1(X)\cong \Z_{|d|}$.
Let~$F \subseteq \C P^2$ be a 
simple sphere with~$[F]=d \in H_2(\C P^2)$.
For~$d=0,\pm 1$,  such a sphere is known to be unique up to topological isotopy; for~$d=\pm 1$ this follows from~\cite[Theorem~1.2]{LeeWilczy}; for~$d=0$ the result can be deduced from~\cite[Theorem~1.4]{ConwayPowell} (see Proposition~\ref{prop:0Class}).
For~$d=\pm 2$, this follows from Theorem~\ref{thm:Main}.
\end{proof}

\subsection{Previous uniqueness results for closed,  oriented, simple surfaces}

Let~$M$ be a closed,  oriented, simply-connected~$4$-manifold.
Given a nonzero, divisibility $d$ class $x \in H_2(M)$,  Lee and Wilczy\'{n}ski prove ambient isotopy uniqueness results for locally flat spheres in~$M$ representing $x$ for: the case~$d=1$; the case where
\begin{equation}\label{eq:inequality}
b_2(M) > \underset{0 \leq j <d}{\operatorname{max}} \ \Big| \sigma(M)-\frac{2j(d-j)}{d^2}x\cdot x \Big|
\end{equation}
and $b_2(M)>6$~\cite[Theorem 1.1]{LeeWilczy}; and for some cases of nonzero even classes when~$3 \leq b_2(M) \leq 6$, 
and~\eqref{eq:inequality} is satisfied
~\cite[Addendum 1]{LeeWilczy}. Note that if~\eqref{eq:inequality} is satisfied, then~$M$ is not a definite $4$-manifold.
These results build on their earlier work which focused on odd~$d$~\cite[Theorem~1.2]{LeeWilczyOdd}.
Hambleton-Kreck obtained similar results~\cite[Theorem~4.5]{HambletonKreckCancellation}.
Boyer also studied the case~$d=1$, without any genus restrictions~\cite[Theorem~F]{BoyerRealization}.
Sunukjian has proved that locally flat~$\Z$-surfaces of the same genus are unique if~$b_2(M) \geq |\sigma(M)|+6$ and that locally flat~$\Z_n$-surfaces of the same genus and homology class are unique if~$b_2(M) \geq |\sigma(M)|+~2$ and the surfaces are not of minimal genus (among locally flat~$\Z_n$-surfaces in the given homology class)~\cite[Theorems 7.2 and 7.4]{Sunukjian}.
In fact, locally flat~$\Z$-surfaces of the same genus are determined up to ambient isotopy by their equivariant intersection form~\cite[Theorem~1.4]{ConwayPowell}.
None of these results covers~$\Z_2$-spheres in $\C P^2$.

\subsection{Comparison with the smooth category}

The class~$d \in H_2(\C P^2)$ is represented by a smoothly embedded sphere if and only if~$d \in \lbrace 0,1,2 \rbrace$~\cite[page 264]{Tristram}.
Kronheimer-Mrowka showed that the minimum genus of a smoothly embedded surface in~$\C P^2$ representing the homology class~$d$ is~$(d-1)(d-2)/2$~\cite[Theorem 1]{KronheimerMrowka}. For~$d>2$,  Kim proved that minimal genus representatives admit exotic copies~\cite[Corollary 3.5 and Theorem 4.5]{KimTwistSpinning} (earlier, for $d\geq 5$, Finashin constructed smoothly distinct minimal genus surfaces, but did not prove that they were topologically isotopic~\cite[Theorem 1.1 and Remark on page 50]{Finashin}).
The case of spheres remained open until very recently when Miyazawa~\cite[Theorems~1.5 and~4.40]{Miyazawa} produced smoothly distinct $\Z_2$-spheres in $\C P^2$. 
Miyazawa's spheres are all topologically isotopic:
they are constructed as connected sums of a holomorphic~$\Z_2$-sphere in~$\C P^2$ with a~$0$-twist $1$-roll spun $2$-knot in $S^4$,  the outcome of this connected sum is a cyclic rim surgery and so one can apply~\cite[Theorem~1.3]{KimRubermanTopologicalTriviality} to conclude topological uniqueness.\footnote{We are grateful to Mark Powell for pointing this out this application of~\cite{KimRubermanTopologicalTriviality}.}
Theorem~\ref{thm:Main} also applies to Miyazawa's examples, providing an alternative proof that they are topologically isotopic. 
We hope the more general nature of Theorem~\ref{thm:Main} will prove the topological triviality of more general examples, e.g.~$\Z_2$-spheres that do not arise as such connected sums.

We also note that the 1997 edition of Kirby's problem list asks whether every minimal genus representative of~$d \in H_2(\C P^2)\cong \Z$ is smoothly isotopic to an algebraic curve and remarks that the topological case is open~\cite[Problem 4.110 ]{KirbyProblemList}.
In the smooth category,  the answer is negative for certain~$d \geq 3$~\cite{Finashin} and for~$d=2$~\cite{Miyazawa},  whereas in the topological category, the answer is negative for every even~$d>4$: for such $d$, Lee-Wilczy\'{n}ski showed that the minimal genus, which in this case equals~$\lfloor d^2/4\rfloor-1$, is realised; in particular they produce surfaces of positive genus that are not topologically isotopic to an algebraic curve~\cite[Corollary~1.3]{LWGenus}.
On the other hand,  the combination of our Theorem~\ref{thm:Main} and~\cite[Theorem~1.2]{LeeWilczy} shows that for spheres, i.e.~for~$d=\pm 1,\pm 2$ (there is no degree $0$ algebraic curve), the answer to Kirby's Problem~4.110 is positive in the topological category.
The question appears to be still open in the topological category for~$d\geq 3$ odd and for~$d=4$.

\subsection{Proof strategy}
The proof of Theorem~\ref{thm:Main} relies on Kreck's modified surgery~\cite{KreckSurgeryAndDuality}.
We recall the main steps,  assuming some familiarity with this theory and note that our strategy is similar to the one used in~\cite{ConwayOrsonPowell} to unknot nonorientable surfaces in the $4$-sphere.
The exterior of a~$\Z_2$-sphere in~$\C P^2$ is almost spin (Proposition~\ref{prop:AlmostSpin}), so its normal~$1$-type is a certain fibration~$p \colon B \to \BSTOP$,  whose definition is recalled in Section~\ref{sub:Normal1Type}.
Given~$\Z_2$-spheres~$F_0,F_1 \subseteq \C P^2$ with exteriors~$X_0,X_1$,  we show that every orientation-preserving homeomorphism~$f \colon \partial X_0 \to \partial X_1$ extends over the tubular neighbourhoods and that~$X_0 \cup_f -X_1$ is again almost spin with fundamental group $\Z_2$ (Proposition~\ref{prop:UnionSpin}).
We combine this fact with an analysis of the bordism group~$\Omega_4(B,p)$ to show that there exists a $(B,p)$-bordism~$(W,\overline{\nu})$ between normal $1$-smoothings for $X_0$ and~$X_1$. 
The associated modified surgery obstruction to modifying~$W$ to be an $s$-cobordism lies in the~$\ell$-monoid~$\ell_5(\Z[\Z_2])$. 
We show in Proposition~\ref{prop:SurfaceData} that the obstruction in fact lies in a certain subset of the monoid, denoted~$\ell_5(\Z[\Z_2],(T-1))$ in the formalism of~\cite{CrowleySixt}.
We then apply a criterion from~\cite{ConwayOrsonPowell} which implies that if~$\ell_5(\Z,-2) \subseteq \ell_5(\Z)$ is trivial, then every element of~$\ell_5(\Z[\Z_2],(T-1))$ is elementary.
Using that the monoid~$\ell_5(\Z)$ is closely related to quadratic linking forms~\cite[Section~6]{CrowleySixt}, we are then able to show that~$\ell_5(\Z,-2)$ is trivial and apply said criterion.
Thus $(W,\overline{\nu})$ is $(B,p)$-bordant rel.~boundary to an $s$-cobordism and, since~$\Z_2$ is a good group,  the~$5$-dimensional $s$-cobordism theorem~\cite{FreedmanQuinn} implies that the homeomorphism~$f$ extends to an orientation-preserving homeomorphism~$\Phi \colon X_0 \to X_1$.
As~$f$ extends over the tubular neighborhoods of the surfaces, we
deduce that $\Phi(F_0)=F_1$.
We then argue that this self-homeomorphism of $\C P^2$ can be assumed to induce the identity on $H_2(\C P^2)$.
Work of Kreck, Perron and Quinn~\cite{MR561244,Perron,Quinn} on the mapping class groups of simply connected, closed $4$-manifolds then applies, to show that $\Phi$ is isotopic to the identity, and so the surfaces are ambiently isotopic.

We remark that it is likely a proof of Theorem~\ref{thm:Main} could also be obtained using the normal~$2$-type instead of the normal $1$-type, but we will not pursue this further here.
We adopted the normal~$1$-type approach here because we found it to be efficient in this case.

\subsection*{Organisation}

In Section~\ref{sec:AlgTop},  we determine the algebraic topology of $\Z_2$-sphere exteriors.
In Section~\ref{sec:Union},  we prove that such exteriors are almost spin as is the union of two such exteriors.
In Section~\ref{sec:ModifiedSurgery},  we apply the modified surgery programme to prove Theorem~\ref{thm:Main}.
Finally Appendix~\ref{sec:Zspheres} proves that there is a unique $\Z$-sphere in $\C P^2$.

\subsection*{Acknowledgements}
AC was partially supported by the NSF grant DMS\unaryminus 2303674.
We are grateful to Marco Golla for a helpful conversation concerning algebraic curves.

\subsection*{Conventions}
The notation $\Z_n$ denotes the cyclic group of order $n$. 
We work in the topological category throughout. 
The term \emph{locally flat} implicitly always includes the idea of being the image of a topological embedding (rather than allowing the possibility of locally flat immersion). 
Manifolds are assumed to be compact, connected and oriented.

\section{The algebraic topology of the exterior}
\label{sec:AlgTop}

Let~$F \subseteq \C P^2$ be a~$\Z_2$-sphere. Write~$\nu(F)$ for an open tubular neighbourhood.
We describe the algebraic topology of the
exterior~$X:=\C P^2 \setminus  \nu (F)$ and its universal cover~$\widetilde{X}$ as well as the corresponding intersection forms.
We have fixed an isomorphism $H_2(\C P^2)\cong\Z$. 
From now on we assume that $[F]=2$, the proofs for $[F]=-2$ being entirely analogous.

\medbreak

In what follows, $Q_W$ denotes the intersection form of a $4$-manifold $W$.
As~$[F]=2\in H_2(\C P^2)$, the self-intersection is~$Q_{\C P^2}([F],[F])=4$.
Thus the closed tubular neighbourhood~$\overline{\nu}(F)$ may be identified with~$D^2\widetilde{\times}_4 S^2$, the~$D^2$-bundle over~$S^2$ with Euler number~$4$. 
Choose such an identification. 
Using this choice we have an identification of $\partial X$ with the lens space~$L(4,1)$,
and we now also fix an identification $\pi_1(L(4,1))=\Z_4$.
As both~$\pi_1(\partial X)$ and~$\pi_1(X)$ are generated by the class of a meridian to~$F$, it follows that the inclusion induced map on~$\pi_1$ is the surjection~$\Z_4\to \Z_2$.

\begin{proposition}
\label{prop:IntegralHomologyGroup}
Given a~$\Z_2$-sphere~$F \subseteq \C P^2$, with exterior~$X$,  the following assertions hold:
\begin{enumerate}
\item we have~$\pi_1(X)=\Z_2=H_1(X);$
\item we have~$H_2(X)=0$ and~$H_3(X)=0$.
\end{enumerate}
\end{proposition}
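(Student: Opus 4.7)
The plan has two parts corresponding to the two claims; neither presents significant difficulty.

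Part~(1) is essentially immediate from the definition of a $\Z_2$-sphere. Since~$F\subseteq\C P^2$ is a~$\Z_2$-sphere, by definition~$\pi_1(\C P^2\setminus F)=\Z_2$, and because the inclusion~$X\hookrightarrow \C P^2\setminus F$ is a homotopy equivalence, $\pi_1(X)=\Z_2$. Abelianising yields~$H_1(X)=\Z_2$.

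For part~(2), I would use the long exact sequence of the pair~$(\C P^2,X)$. By excision and the Thom isomorphism for the rank-two oriented disc bundle~$\overline{\nu}(F)\to F\cong S^2$,
\[
H_k(\C P^2,X)\cong H_k(\overline{\nu}(F),\partial\overline{\nu}(F))\cong H_{k-2}(S^2),
\]
which is~$\Z$ for~$k=2,4$ and zero for~$k=3$. Using also~$H_1(\C P^2)=H_3(\C P^2)=0$, the relevant portion of the long exact sequence reduces to
\[
0\to H_4(\C P^2)\to H_4(\C P^2,X)\to H_3(X)\to 0
\]
and
\[
0\to H_2(X)\to H_2(\C P^2)\xrightarrow{\phi} H_2(\C P^2,X)\to H_1(X)\to 0.
\]

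The last step is to identify the two maps out of~$H_\ast(\C P^2)$. In degree~$4$, the map sends the fundamental class of~$\C P^2$ to the relative fundamental class of~$(\overline{\nu}(F),\partial\overline{\nu}(F))$, which under the Thom isomorphism corresponds to the fundamental class of~$S^2$; it is therefore an isomorphism, and so~$H_3(X)=0$. In degree~$2$, by naturality of the Thom isomorphism (equivalently, by Poincar\'{e}-Lefschetz duality and the intersection pairing), the map~$\phi$ is identified with multiplication by~$[\C P^1]\cdot[F]=2$; this is injective, forcing~$H_2(X)=0$, and the cokernel is~$\Z_2$, consistent with part~(1). The only mildly subtle point is this identification of~$\phi$, but it is a standard consequence of the naturality of the Thom class; everything else is bookkeeping.
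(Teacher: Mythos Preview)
Your proof is correct, but the route differs from the paper's. For~$H_3(X)=0$, the paper argues via duality: since~$H_1(X)$ is finite, the universal coefficient theorem gives~$H^1(X)=0$, the long exact sequence of the pair then gives~$H^1(X,\partial X)=0$, and Poincar\'{e}--Lefschetz duality yields~$H_3(X)=0$. For~$H_2(X)=0$, the paper uses the Mayer--Vietoris sequence of~$\C P^2=X\cup\overline{\nu}(F)$: since~$H_2(\partial X)=0$ one obtains an injection~$H_2(X)\oplus H_2(\overline{\nu}(F))\hookrightarrow H_2(\C P^2)\cong\Z$, and a rank count forces~$H_2(X)=0$. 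Your approach is more uniform, running both computations through the single long exact sequence of the pair~$(\C P^2,X)$ combined with the Thom isomorphism; the price is that you must identify the connecting maps explicitly (in particular~$\phi$ as multiplication by~$[\C P^1]\cdot[F]=2$), whereas the paper's rank argument sidesteps this. Both arguments are standard and of comparable difficulty.
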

\begin{proof}
The first assertion is immediate. 
As $H_1(X)$ is finite, the universal coefficient theorem implies $H^1(X)=0$. As $X$ has connected boundary, the long exact sequence of the pair 
now gives $H^1(X,\partial X)=0$. So then $H_3(X)=0$ by Poincar\'{e} duality.
We prove that~$H_2(X)=0$.
As~$H_2(\partial X)=0$, the Mayer-Vietoris exact sequence for~$\C P^2=X \cup \overline{\nu}(F)$ reduces to
\[
0 \to H_2(X) \oplus H_2(\overline{\nu}(F)) \to H_2(\C P^2).
\]
As~$H_2(\C P^2)\cong\Z$ and the second map is an injection, we deduce that~$H_2(X)$ is free. As~$H_2(\overline{\nu}(F))\cong\Z$ and the second map is an injection,~$H_2(X)=0$, for rank reasons.
\end{proof}

In what follows, we write~$\Z_2=\langle T \mid T^2=1 \rangle$ and use~$\Z_-$ to denote~$\Z$ with the~$\Z[\Z_2]$-module structure induced by~$T \cdot n=-n$, for every~$n \in \Z$.

\begin{proposition}
\label{prop:H2UnivCover}
Given a~$\Z_2$-sphere~$F \subseteq \C P^2$, with exterior~$X$, we have
\[
\pi_2(X) \cong H_2(\widetilde{X})\cong\Z_-.
\]
\end{proposition}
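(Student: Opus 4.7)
The plan is to first reduce to computing $H_2(\widetilde{X})$ via Hurewicz, then determine its underlying abelian group using the Euler characteristic together with Poincar\'e--Lefschetz duality, and finally pin down the $\Z[\Z_2]$-module structure by exploiting the already-established vanishing of $H_2(X)$.

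Since $\pi_1(X) = \Z_2$ the universal cover $p\colon \widetilde{X} \to X$ is a connected double cover and $\widetilde{X}$ is simply connected, so Hurewicz immediately gives $\pi_2(X) \cong \pi_2(\widetilde{X}) \cong H_2(\widetilde{X})$. I would then show $H_2(\widetilde{X}) \cong \Z$ as an abelian group. Mayer--Vietoris applied to Euler characteristics for $\C P^2 = X \cup \overline{\nu}(F)$ yields $\chi(X) = 1$, hence $\chi(\widetilde{X}) = 2$. Since $\widetilde{X}$ is a compact $4$-manifold with nonempty boundary $\partial\widetilde{X}$, we have $H_4(\widetilde{X}) = 0$, and Poincar\'e--Lefschetz duality identifies $H_3(\widetilde{X}) \cong H^1(\widetilde{X}, \partial \widetilde{X})$. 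The cohomology long exact sequence of the pair, combined with the isomorphism $H^0(\widetilde{X}) \to H^0(\partial \widetilde{X})$ and the vanishing $H^1(\widetilde{X}) \cong \Hom(H_1(\widetilde{X}), \Z) = 0$, forces $H_3(\widetilde{X}) = 0$, so by $\chi$ the group $H_2(\widetilde{X})$ has rank $1$. A symmetric argument gives $H^3(\widetilde{X}) \cong H_1(\widetilde{X}, \partial \widetilde{X}) = 0$, and the universal coefficient theorem then implies that the torsion subgroup of $H_2(\widetilde{X})$ vanishes.

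Finally I must determine whether the generator $T$ of $\Z_2$ acts as $+1$ or $-1$ on $H_2(\widetilde{X}) \cong \Z$. For this I would use the transfer $\tau\colon H_*(X) \to H_*(\widetilde{X})$, which satisfies $\tau \circ p_* = 1 + T$ on $H_*(\widetilde{X})$. Since $H_2(X) = 0$ by Proposition~\ref{prop:IntegralHomologyGroup}, this composition is zero, so $1 + T$ acts as zero on $H_2(\widetilde{X})$, forcing $T = -1$ and therefore $H_2(\widetilde{X}) \cong \Z_-$. Equivalently, one could run the Cartan--Leray spectral sequence for $\widetilde{X} \to X$ with rational coefficients: over $\Q$ the $E_2$-page collapses to the column $p = 0$ and the vanishing of $H_2(X;\Q)$ forces the $\Z_2$-invariants of $H_2(\widetilde{X};\Q) \cong \Q$ to vanish, which only happens under the sign action. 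There is no single difficult step; the argument is routine algebraic topology, with the transfer identity (or equivalently the rational Cartan--Leray argument) serving as the key observation that rules out the trivial $\Z[\Z_2]$-action.
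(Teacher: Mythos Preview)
Your argument is correct and follows the same overall architecture as the paper's proof: Hurewicz for the first isomorphism, an Euler-characteristic count together with Poincar\'e--Lefschetz duality to see that $H_2(\widetilde{X})\cong\Z$ as an abelian group, and then a separate argument to determine the $\Z[\Z_2]$-action.

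The one genuine difference is in this last step. The paper passes to rational coefficients, identifies the $(+1)$-eigenspace of $H_2(\widetilde{X};\Q)$ with $H_2(X;\Q)=0$ via \cite[Chapter~3, Theorem~7.2]{BredonIntroduction}, and then invokes Reiner's classification of $\Z[\Z_2]$-lattices to descend from $\Q_-$ to $\Z_-$. Your transfer argument is more direct: the identity $\tau\circ p_*=1+T$ on $H_2(\widetilde{X})$, combined with the integral vanishing $H_2(X)=0$, immediately forces $T=-1$ without any appeal to Reiner or any passage to rational coefficients. (Your alternative Cartan--Leray sketch is essentially the paper's rational argument in spectral-sequence language.) Both routes are standard; the transfer version is shorter and stays over~$\Z$, while the paper's version makes the eigenspace decomposition explicit, which meshes naturally with later use of Reiner-type decompositions elsewhere in the literature on $\Z[\Z_2]$-modules.
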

\begin{proof}
The first isomorphism is a consequence of the Hurewicz theorem and so we focus on proving that $H_2(\widetilde{X})\cong\Z_-.$
We first calculate~$H_2(\widetilde{X})$ as an abelian group.
Since~$\widetilde{X}$ is simply-connected, we see that~$H_3(\widetilde{X})=0$ and~$H_2(\widetilde{X})$ is free.
To deduce that~$H_2(\widetilde{X})$ is free of rank~$1$,  we use Proposition~\ref{prop:IntegralHomologyGroup} to obtain
\[
2=2\chi(X)=\chi(\widetilde{X})=1+b_2(\widetilde{X}).
\]
We now determine the~$\Z[\Z_2]$-module structure of~$H_2(\widetilde{X})$.

We claim that~$H_2(\widetilde{X};\Q)=\Q_-$.
We use~$\mathcal{E}_\pm(H)$ to denote the~$(\pm 1)$-eigenspaces of an endomorphism~$T \colon H \to H$ with~$T^2=1$.
Using~\cite[Chapter 3, Theorem~7.2]{BredonIntroduction} together with Proposition~\ref{prop:IntegralHomologyGroup}, we obtain
$$\mathcal{E}_+(H_2(\widetilde{X};\Q))=H_2(\widetilde{X};\Q)^{\Z_2}\cong H_2(X;\Q)=0.$$
Since we have already established that~$\operatorname{dim}_\Q H_2(\widetilde{X};\Q)=1$, it follows that
$$\Q
=H_2(\widetilde{X};\Q)
=\mathcal{E}_+(H_2(\widetilde{X};\Q))  \oplus \mathcal{E}_-(H_2(\widetilde{X};\Q)) 
=\mathcal{E}_-(H_2(\widetilde{X};\Q)).
$$
This establishes our claim that~$H_2(\widetilde{X};\Q)=\Q_-$.

Using the fact that~$H_2(X) \cong \Z$ is torsion free as an abelian group, a theorem of Reiner~\cite{Reiner} implies that as a~$\Z[\Z_2]$-module,~$H_2(X)$ decomposes as~$P \oplus P_+ \oplus P_-$. 
Here,~$P$ is~$\Z[\Z_2]$-projective,~$P_+$ has the trivial~$\Z[\Z_2]$-action and~$P_-$ has the~$\Z[\Z_2]$-action where~$T$ operates by~$-1$.
The claim implies that~$P=0$ and~$P_+=0$. 
We obtain~$H_2(\widetilde{X})=\Z_-$, as required.
\end{proof}

\begin{proposition}
\label{prop:IntersectionForm}
Given a~$\Z_2$-sphere~$F \subseteq \C P^2$, with exterior~$X$, the universal cover has intersection form~$Q_{\widetilde{X}}=(- 2)$ and thus~$\widetilde{X}$ is spin.
\end{proposition}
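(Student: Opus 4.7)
The plan is to first pin down $|Q_{\widetilde{X}}|=2$ by a homological argument and then determine the sign via a signature computation on the topological branched double cover of $\C P^2$ along $F$.

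For the magnitude, I would begin by identifying $\partial\widetilde{X}$: the inclusion-induced map $\pi_1(\partial X)=\Z_4\to\Z_2=\pi_1(X)$ is the canonical surjection recalled before the proposition, so $\partial\widetilde{X}$ is the connected double cover of $L(4,1)$ corresponding to $2\Z_4\subset\Z_4$, namely $L(2,1)=\R P^3$. Plugging $H_2(\R P^3)=0$, $H_1(\R P^3)=\Z_2$, $H_1(\widetilde{X})=0$, and the identification $H_2(\widetilde{X},\partial\widetilde{X})\cong H^2(\widetilde{X})\cong\Z$ (via Poincaré-Lefschetz duality and universal coefficients, using $H_1(\widetilde X)=0$) into the long exact sequence of the pair yields
\[
0\to\Z\xrightarrow{\,\cdot n\,}\Z\to\Z_2\to 0,
\]
in which the first map is the adjoint of $Q_{\widetilde{X}}$ on the generator $x$ of $H_2(\widetilde X)$. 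Hence $n=\pm 2$. Since $n$ is even, the form is even; combined with $H^2(\widetilde{X};\Z_2)\cong\Hom(H_2(\widetilde{X}),\Z_2)$, this forces $w_2(\widetilde{X})=0$, so $\widetilde{X}$ is spin.

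To fix the sign, I would form the topological branched double cover $Y=\widetilde{X}\cup_{\R P^3}(D^2\widetilde{\times}_2 S^2)$ of $\C P^2$ along $F$. The gluing is orientation-compatible because the deck cover $\partial\widetilde{X}\to\partial X$ agrees with the fibrewise squaring $\partial(D^2\widetilde{\times}_2 S^2)\to\partial\overline{\nu}(F)=L(4,1)$. Novikov additivity then gives $\sigma(Y)=\sigma(\widetilde{X})+\sigma(D^2\widetilde{\times}_2 S^2)=\operatorname{sign}(n)+1$, so it suffices to show $\sigma(Y)=0$. For this, I would apply the Rokhlin signature formula $\sigma(Y)=2\sigma(\C P^2)-\tfrac{1}{2}[F]^2=2-2=0$ for a double cover branched along a locally flat surface; equivalently, the $G$-signature theorem applied to the $\Z_2$-action on $Y$ whose fixed set is the lift of $F$ (a locally flat surface of self-intersection $[F]^2/2=2$). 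Either route gives $\operatorname{sign}(n)=-1$ and hence $Q_{\widetilde{X}}=(-2)$.

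The main obstacle is invoking the signature formula for branched covers in the topological rather than smooth category; the resolution is to appeal to the locally linear $G$-signature theorem, valid for locally flat $\Z_2$-actions with surface fixed set. A possible alternative that sidesteps this is to use Freedman's classification of simply connected topological $4$-manifolds with prescribed boundary to identify $\widetilde{X}$ with one of the disk bundles $D^2\widetilde{\times}_{\pm 2}S^2$, and then fix the sign by carefully tracking the orientation on $\partial\widetilde{X}$ inherited from $\C P^2$.
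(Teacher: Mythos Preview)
Your proposal is correct and follows essentially the same route as the paper: identify $\partial\widetilde{X}\cong\R P^3$, use the long exact sequence of the pair to get $Q_{\widetilde X}=(\pm 2)$, then determine the sign via Novikov additivity on the branched double cover together with the formula $\sigma(\Sigma_2(F))=2\sigma(\C P^2)-\tfrac12[F]^2$. The obstacle you flag is exactly the one the paper handles by citing \cite{GeskeKjuchukovaShaneson} for the topological branched-cover signature formula, and the halving of the Euler number of $\nu(F)$ in the cover is justified there by reference to \cite[Proposition~4.5]{ConwayOrsonPowell}.
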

\begin{proof}
Fix a generator for~$H_2(\widetilde{X})\cong\Z$, the dual generator for~$H_2(\widetilde{X})^*$, and the Poincar\'e dual generator for~$H_2(\widetilde{X},\partial \widetilde{X})$.
With respect to these generators, the matrix for~$Q_{\widetilde{X}}$ is the same as the~$1\times1$ matrix for the inclusion-induced map~$H_2(\widetilde{X}) \to H_2(\widetilde{X},\partial \widetilde{X})$.
We calculate the latter.

Since~$\partial \widetilde{X}$ is the cover of~$\partial X=L(4,1)$ corresponding to the kernel of the epimorphism~$\Z_4 \twoheadrightarrow \Z_2$, it is homeomorphic to~$L(2,1) \cong \R P^3$. In particular,~$H_1(\partial \widetilde{X})=\Z_2$ and~$H_2(\partial \widetilde{X})=0$.
The long exact sequence of the pair~$(\widetilde{X},\partial \widetilde{X})$ thus yields the short exact sequence
$$ 0 \to H_2(\widetilde{X}) \to H_2(\widetilde{X},\partial \widetilde{X}) \to \Z_2 \to 0.$$
We deduce that any matrix for the inclusion induced map~$H_2(\widetilde{X}) \to H_2(\widetilde{X},\partial \widetilde{X})$ is of the form~$(\pm 2)$. 
As we explained above, this implies that~$Q_{\widetilde{X}}=(\pm 2).$

It remains to determine whether~$Q_{\widetilde{X}}$ is represented by~$(2)$ or~$(-2)$.
Write $\Sigma_2(F)$ for the double cover of $\C P^2$, branched over $F$, and~$N_2(F)\subseteq \Sigma_2(F)$ for the preimage of~$\ol{\nu}(F)\subseteq \C P^2$ under the branched double covering map. By Novikov addivity we have
\begin{equation}
\label{eq:Signature}
\sigma(\Sigma_2(F))=\sigma(\widetilde{X})+\sigma(N_2(F)).
\end{equation}
We claim that as~$F\subseteq \C P^2$ has Euler number $4$, so~$F\subseteq \Sigma_2(F)$ has Euler number $2$. The analogous multiplicativity fact was shown in~\cite[Proposition~4.5, Claim 1]{ConwayOrsonPowell}, when~$F$ is a nonorientable surface. Setting~$F$ to be a~$2$-sphere does not affect the proof in~\cite{ConwayOrsonPowell}, which can thus be used verbatim in our case. Hence~$F\subseteq \Sigma_2(F)$ has self-intersection $2$, as claimed.

The claim implies that $N_2(F)$ is a $D^2$-bundle over $S^2$ with Euler number $2$ and therefore~$\sigma(N_2(F))=1$.
On the other hand, using~\cite[Theorem 1]{GeskeKjuchukovaShaneson}, we compute
\[
\sigma(\Sigma_2(F))= 2\sigma(\C P^2)-\frac{1}{2}e(F)=2-2=0.
\]
It follows from~\eqref{eq:Signature} that~$\sigma(\widetilde{X})=-1$ and thus~$Q_{\widetilde{X}}=(-2)$, as claimed.
The fact that~$\widetilde{X}$ is spin follows because it is a simply-connected~$4$-manifold with even intersection form.
\end{proof}

\begin{corollary}
\label{cor:EquivariantIntersectionForm}
The equivariant intersection form of the exterior of a~$\Z_2$-sphere~$F \subseteq \C P^2$ is isometric to the Hermitian pairing
\begin{align*}
\Z_- \times \Z_- &\to \Z[\Z_2]  \\
(x,y) & \mapsto -2(1-T)xy.
\end{align*}
\end{corollary}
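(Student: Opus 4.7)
The plan is to read off the equivariant intersection form directly from the two previous propositions, after fixing the relevant conventions. By Proposition~\ref{prop:H2UnivCover} we have $H_2(\widetilde{X})\cong \Z_-$ as $\Z[\Z_2]$-modules; let $e$ be a generator of the underlying abelian group $\Z$. By Proposition~\ref{prop:IntersectionForm}, the ordinary intersection form of $\widetilde{X}$ satisfies $e\cdot e=-2$. The equivariant intersection form is
\[
\lambda(x,y)=\sum_{g\in \Z_2}(x\cdot gy)\,\bar{g}=(x\cdot y)+(x\cdot Te)\,T,
\]
using that the involution on $\Z[\Z_2]$ sends $T$ to $T^{-1}=T$.

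Next I would compute $\lambda(e,e)$. Since the deck transformation $T$ acts as $-1$ on $\Z_-=H_2(\widetilde{X})$, we have $Te=-e$, and $\Z$-bilinearity of $\cdot$ gives $e\cdot Te=-e\cdot e=2$. Therefore
\[
\lambda(e,e)=-2+2T=-2(1-T).
\]
Extending $\Z$-bilinearly in the underlying integers, for $x=me$ and $y=ne$ with $m,n\in\Z$,
\[
\lambda(x,y)=mn\,\lambda(e,e)=-2(1-T)\,xy,
\]
matching the pairing in the statement.

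Finally I would verify that this formula is compatible with the $\Z[\Z_2]$-sesquilinearity induced by the $\Z_-$ structure, to make sure the bookkeeping is consistent: for instance $\lambda(Te,e)=\lambda(-e,e)=2(1-T)$, while $T\cdot\lambda(e,e)=T(-2+2T)=2-2T=2(1-T)$, and the Hermitian symmetry is automatic because $-2(1-T)$ is self-conjugate under $T\mapsto T$. This is the only place where a genuine check is needed, and it is routine; the result is then immediate from Propositions~\ref{prop:H2UnivCover} and~\ref{prop:IntersectionForm}.
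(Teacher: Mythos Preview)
Your argument is correct and follows essentially the same route as the paper: both compute the equivariant intersection form from its definition $\lambda(x,y)=\sum_{g}(x\cdot gy)\,\bar g$ using $Q_{\widetilde X}=(-2)$ from Proposition~\ref{prop:IntersectionForm} and the fact that $T$ acts as $-1$ on $H_2(\widetilde X)\cong\Z_-$ from Proposition~\ref{prop:H2UnivCover}. Note the small typo in your first display, where $(x\cdot Te)$ should read $(x\cdot Ty)$; this does not affect the subsequent computation.
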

\begin{proof}
This follows immediately by combining  the definition of the equivariant intersection form with Propositions~\ref{prop:H2UnivCover} and~\ref{prop:IntersectionForm}: for every~$x,y \in H_2(\widetilde{X}) \cong \Z_-$, we have
$$\lambda_{X}(x,y)=Q_{\widetilde{X}}(x,y)+TQ_{\widetilde{X}}(x,Ty)=(1-T)Q_{\widetilde{X}}(x,y)=-2(1-T)xy.~$$
\end{proof}


\section{An almost spin union of sphere exteriors}
\label{sec:Union}

A manifold is called \emph{almost spin} if its universal cover is spin, but the manifold itself is not spin. In this section we show that
exteriors of~$\Z_2$-spheres are almost spin and that glueing two such exteriors together also results in an almost spin manifold.

\begin{proposition}
\label{prop:AlmostSpin}
The exterior~$X$ of a~$\Z_2$-sphere~$F \subseteq \C P^2$ is almost spin.
\end{proposition}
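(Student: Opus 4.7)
The plan is to check the two conditions in the definition of almost spin. Since Proposition~\ref{prop:IntersectionForm} already shows that $\widetilde{X}$ is spin, the task reduces to proving that $w_2(X)\neq 0$ in $H^2(X;\Z_2)$.

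First I would compute $H^2(X;\Z_2)$. From Proposition~\ref{prop:IntegralHomologyGroup} we have $H_1(X)\cong \Z_2$ and $H_2(X)=0$, so the universal coefficient theorem gives $H^2(X;\Z_2)\cong \operatorname{Ext}(\Z_2,\Z_2)\cong\Z_2$. Thus there is a unique nonzero class to detect.

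Next I would compare $X$ to $\C P^2$ via the inclusion $j\colon X\hookrightarrow \C P^2$. Since $X$ is a codimension-zero submanifold with boundary, $TX=j^*T\C P^2$, so $w_2(X)=j^*w_2(\C P^2)$. As $\C P^2$ is not spin, $w_2(\C P^2)$ generates $H^2(\C P^2;\Z_2)\cong \Z_2$, and it suffices to prove that $j^*\colon H^2(\C P^2;\Z_2)\to H^2(X;\Z_2)$ is injective. For this I would use the long exact sequence of the pair $(\C P^2, X)$ with $\Z_2$ coefficients, combined with excision and the Thom isomorphism for the normal $D^2$-bundle of $F$, to identify $H^2(\C P^2, X;\Z_2)\cong H^0(S^2;\Z_2)\cong \Z_2$, generated by the Thom class $\tau$.

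The main point, and the only nontrivial step, is the identification of the connecting map $H^2(\C P^2, X;\Z_2)\to H^2(\C P^2;\Z_2)$: it sends $\tau$ to the Poincar\'e dual of $[F]\in H_2(\C P^2;\Z_2)$, which vanishes because $[F]=2[\C P^1]$ is zero mod $2$. By exactness $j^*$ is then injective, hence $w_2(X)=j^*w_2(\C P^2)\neq 0$, so $X$ is not spin and thus almost spin, as required.
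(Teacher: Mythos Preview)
Your argument is correct. Both your proof and the paper's establish $w_2(X)\neq 0$ by restricting $w_2(\C P^2)$ along $j\colon X\hookrightarrow \C P^2$; the paper packages the computation by observing that $[F]$ is not characteristic (since $Q_{\C P^2}(1,1)=1\not\equiv Q_{\C P^2}(1,[F])=2 \bmod 2$) and then citing \cite[Lemma~3.4]{OrsonPowellSpines}, whereas you give a self-contained version of that same implication via the Thom isomorphism and the long exact sequence of the pair, using that $[F]\equiv 0\bmod 2$ forces $j^*$ to be injective on $H^2(-;\Z_2)$. One terminological quibble: the map $H^2(\C P^2, X;\Z_2)\to H^2(\C P^2;\Z_2)$ you analyze is not the connecting homomorphism but the natural map in the long exact sequence; this does not affect the argument.
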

\begin{proof}
Since~$Q_{\C P^2}(1,[F])=2$ and~$Q_{\C P^2}(1,1)=1$ do not agree modulo 2, we see that~$[F]$ is not a characteristic class. 
It then follows that~$w_2(X)\neq 0$; see e.g.~\cite[Lemma 3.4]{OrsonPowellSpines}.
Hence~$X$ is not spin. On the other hand, we showed in Proposition~\ref{prop:IntersectionForm} that~$\widetilde{X}$ is spin.
\end{proof}

The next proposition concerns unions of $\Z_2$-sphere exteriors along their boundary.

\begin{proposition}\label{prop:UnionSpin}
For~$i=0,1$, let~$F_i\subseteq \C P^2$ be a~$\Z_2$-sphere with $[F_i]=2\in H_2(\C P^2)$, and with exterior~$X_i$.
Let~$f\colon \partial X_{0} \to \partial X_{1}$ be any orientation-preserving homeomorphism, and define~$M:=X_{0} \cup_f -X_{1}$. Then the following hold:
\begin{enumerate}
\item\label{item:1spin} 
the map~$f$ extends to an orientation-preserving homeomorphism~$\overline{\nu}(F_0)\to \overline{\nu}(F_1)$, which restricts to a homeomorphism $F_0\to F_1$;
\item\label{item:2spin} the inclusion induced maps~$\pi_1(X_i)\to \pi_1(M)$ are isomorphisms (in particular~$\pi_1(M)\cong\Z_2$, generated by a meridian of~$F_i$); and
\item\label{item:3spin}
the union~$M$ is almost spin.
\end{enumerate}
\end{proposition}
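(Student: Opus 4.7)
Throughout I fix the identifications $\overline{\nu}(F_i) \cong D^2\widetilde{\times}_4 S^2$ and $\partial X_i \cong L(4,1)$ from the start of Section~\ref{sec:AlgTop}, and set $N := D^2\widetilde{\times}_4 S^2$. For part \eqref{item:1spin}, the key classical input is that every orientation-preserving self-homeomorphism of $L(4,1)$ extends to a self-homeomorphism of $N$ carrying the zero section to itself. Indeed, the orientation-preserving mapping class group of $L(4,1)$ is known to be $\Z_2$, generated by complex conjugation, and this involution extends to $N$: identifying $N$ with a tubular neighbourhood of the conic $\{x^2+y^2+z^2=0\} \subset \C P^2$, complex conjugation on $\C P^2$ restricts to the required extension. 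Given this, I pick any orientation-preserving homeomorphism $g\colon \overline{\nu}(F_0) \to \overline{\nu}(F_1)$; the discrepancy $f \circ (g|_{\partial X_0})^{-1}$ is an orientation-preserving self-homeomorphism of $\partial X_1$, which extends to some self-homeomorphism $h$ of $\overline{\nu}(F_1)$, and $h \circ g$ furnishes the required extension carrying $F_0$ to $F_1$. Part \eqref{item:2spin} follows from Seifert--van Kampen:
\[
\pi_1(M) = \pi_1(X_0) *_{\pi_1(\partial X_0)} \pi_1(X_1),
\]
where the structure map to $\pi_1(X_0) = \Z_2$ is the canonical surjection from $\Z_4$ (by the setup of Section~\ref{sec:AlgTop}) and the structure map to $\pi_1(X_1) = \Z_2$ is this surjection precomposed with $f_* \in \Aut(\Z_4) = \{\pm 1\}$. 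Both automorphisms reduce modulo $2$ to the identity, so the two structure maps agree, the pushout is $\Z_2$, and the inclusion-induced maps $\pi_1(X_i) \to \pi_1(M)$ are isomorphisms.

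For part \eqref{item:3spin}, $M$ is not spin since $X_0 \subset M$ is open and so $w_2(M)|_{X_0} = w_2(X_0) \neq 0$ by the proof of Proposition~\ref{prop:AlmostSpin}. To show $\widetilde M$ is spin: $\widetilde M \to M$ is a double cover (as $\pi_1(M) = \Z_2$), restricting over each $X_i$ to $\widetilde X_i$, so $\widetilde M = \widetilde X_0 \cup_{\tilde f} -\widetilde X_1$ is glued along the double cover $L(2,1) = \R P^3$ of $L(4,1)$ (the lift $\tilde f$ exists because $f_*$ preserves $2\Z_4 \subset \Z_4$). Since $\widetilde M$ is simply connected and closed, it is spin iff its intersection form is even. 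A Mayer--Vietoris computation using $H_2(\R P^3) = 0$ and $H_1(\R P^3) = \Z_2$ gives
\[
0 \to \Z \oplus \Z \to H_2(\widetilde M) \to \Z_2 \to 0,
\]
with $\Z \oplus \Z$ generated by classes $\alpha_0, \alpha_1 \in H_2(\widetilde X_i)$ satisfying $\alpha_0^2 = -2$, $\alpha_1^2 = +2$ (Proposition~\ref{prop:IntersectionForm}, with orientation reversal on $\widetilde X_1$), and $\alpha_0 \cdot \alpha_1 = 0$ via representatives in the disjoint interiors. Torsion-freeness of $H_2(\widetilde M)$ (as $\widetilde M$ is a closed, simply connected, oriented $4$-manifold) forces $H_2(\widetilde M) \cong \Z^2$.

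To compute the form, let $\beta \in H_2(\widetilde M)$ be a class projecting to the generator of $\Z_2$. Then $2\beta = a\alpha_0 + b\alpha_1$ for some $(a,b) \in \{0,1\}^2$ (modulo the ambiguity $\beta \leadsto \beta + k\alpha_0 + \ell\alpha_1$). The case $(0,0)$ makes $\beta$ torsion and is ruled out; the cases $(1,0)$ and $(0,1)$ force $\beta^2$ to be a non-integer via $\alpha_i^2 = \pm 2$; only $(1,1)$ survives, giving $2\beta = \alpha_0 + \alpha_1$. A short calculation then yields $\beta^2 = 0$ and $\alpha_0 \cdot \beta = -1$, so in the basis $(\alpha_0, \beta)$ the intersection form is
\[
\bsm -2 & -1 \\ -1 & 0 \esm,
\]
which is even. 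Hence $\widetilde M$ is spin and $M$ is almost spin. The most delicate step is this intersection form computation, in particular pinning down the extension class as $(1,1)$ via the torsion-freeness elimination; a secondary concern is the mapping class group input for \eqref{item:1spin}, which is classical but external to the paper.
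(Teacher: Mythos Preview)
Your proof is correct. Parts~\eqref{item:1spin} and~\eqref{item:2spin} match the paper's approach essentially verbatim: the paper also invokes Bonahon's classification of the mapping class group of $L(4,1)$ (identity or the involution $\tau$), observes that both representatives extend as bundle maps, and then runs the same van Kampen argument using $f_* = \pm 1$ on $\Z_4$.

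Part~\eqref{item:3spin} is where you genuinely diverge. The paper does not compute $Q_{\widetilde{M}}$ at all; instead it invokes a result of Boyer (\cite[Proposition~0.8]{BoyerUniqueness}) which, given simply-connected $4$-manifolds with even intersection form glued along a rational homology sphere, decides the parity of the union in terms of whether $\widetilde{f}_*$ on $H_1(\partial\widetilde{X}_i)$ agrees with the boundary isometry $\partial\varphi$ of some isometry $\varphi\colon Q_{\widetilde{X}_0}\cong Q_{\widetilde{X}_1}$. Since $H_1(\partial\widetilde{X}_i)\cong\Z_2$ admits a unique automorphism, this is automatic, and Boyer's criterion gives spin. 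Your argument is more elementary and fully self-contained: the Mayer--Vietoris extension $0\to\Z^2\to H_2(\widetilde{M})\to\Z_2\to 0$ together with torsion-freeness pins down the lattice, and the integrality constraint on $\beta^2$ forces $2\beta=\alpha_0+\alpha_1$, yielding the even form $\left(\begin{smallmatrix}-2&-1\\-1&0\end{smallmatrix}\right)$. This buys you independence from an external reference and an explicit identification of $Q_{\widetilde{M}}$; the paper's route is shorter but opaque, and in principle generalises to situations where the direct lattice computation would be harder.
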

\begin{proof}
For~$i=0,1$, fix an identification between~$\overline{\nu}(F_i)$ and~$D^2\widetilde{\times}_4 S^2$, the~$2$-disc bundle over the~$2$-sphere with Euler number 4. Choose once and for all an identification~$\pi_1(L(4,1))\cong \Z_4$. By \cite[Th\'{e}or\`{e}me 3(c)]{Bonahon}, the
homeomorphism $f$ is isotopic to the identity map or to a certain smooth involution, which we will call~$\tau$. Both of these maps extend to orientation-preserving bundle self-isomorphisms of~$D^2\widetilde{\times}_4 S^2$ (see e.g.~\cite[Definition~4.6]{OrsonPowellSpines}). This shows that \eqref{item:1spin} holds, up to an isotopy. But we may insert this isotopy in a boundary collar of~$\overline{\nu}(F_0)$, so that indeed \eqref{item:1spin} holds, on the nose.

On fundamental groups, we have~$f_*=\pm1\colon\Z_4\to\Z_4$, with $f_*=1$ if it is isotopic to the identity and~$f_*=-1$ if it is isotopic to $\tau$ (see e.g.~\cite[Lemma~4.7]{OrsonPowellSpines}). As~$\pm 1\equiv 1\pmod 2$, we see that~$f_*$ intertwines the inclusion induced maps $\pi_1(\partial X_i)\to\pi_1(X_i)=\Z_2$. This firstly shows, via a van-Kampen argument, that \eqref{item:2spin} holds. It secondly shows that~$f$ lifts to an orientation-preserving homeomorphism~$\widetilde{f}\colon \partial \widetilde{X}_0\to \partial \widetilde{X}_1$, such that~$\widetilde{M}=\widetilde{X}_{0} \cup_{\widetilde{f}}-\widetilde{X}_{1}$. 

Since the~$X_{i}$ are not spin, neither is the union~$M$. We claim that~$\widetilde{M}$ is spin. 
The problem of whether the union of simply-connected~$4$-manifolds along a rational homology sphere boundary is spin has been addressed by Boyer~\cite{BoyerUniqueness}. Note that the non-trivial double cover of $L(4,1)$ is $L(2,1)$,  a rational homology sphere.
Any isometry $\varphi \colon Q_{\widetilde{X}_{0}} \cong Q_{\widetilde{X}_{1}}$ induces a boundary isometry $\partial \varphi \colon H_1(\partial \widetilde{X}_0) \to H_1(\partial \widetilde{X}_1)$.
Since $H_1(\partial \widetilde{X}_i)\cong \Z_2$, there is a unique such isomorphism and therefore
$$\partial \varphi=\widetilde{f}_* \colon H_1(\partial \widetilde{X}_0) \to H_1(\partial \widetilde{X}_1).$$
It follows that, in Boyer's language, the pair~$(f,\varphi)$ is a ``morphism"~\cite[p.~334]{BoyerUniqueness}.
Since~$\widetilde{X}_{0}$ is simply-connected with even intersection form and the~$L(2,1)$ is a rational homology sphere, we deduce from~\cite[Proposition 0.8, combination of (i)+(iii)]{BoyerUniqueness} that~$\widetilde{M}$ is spin.
This concludes the proof of the claim and of item~\eqref{item:3spin}. 
\end{proof}

\section{Modified Surgery}
\label{sec:ModifiedSurgery}

We assume some familiarity with modified surgery theory~\cite{KreckSurgeryAndDuality}.
In particular,  we assume that the reader is familiar with normal smoothings and normal~$k$-types~\cite[Section 2]{KreckSurgeryAndDuality}; see also~\cite{KasprowskiLandPowellTeichner} for a nice exposition in the context of $4$-manifolds.
We apply the modified surgery programme to prove that homologous $\Z_2$-spheres are isotopic: in Section~\ref{sub:Normal1Type} we prove that $\Z_2$-sphere exteriors are bordant over their normal $1$-type,  whereas in Section~\ref{sub:Monoid} we conclude by analysing the $\ell$-monoid.

\subsection{The normal~$1$-type}
\label{sub:Normal1Type}

Let~$M$ be an almost spin~$4$-manifold with fundamental group~$\Z_2$. Choose a map~$c\colon M\to \R P^\infty$ inducing the isomorphism on fundamental groups. By \cite[Lemma 3.17]{KasprowskiLandPowellTeichner}, there is a unique class~$w\in H^2(\R P^\infty;\Z_2)$ such that~$c^*(w)=w_2(M)$. In an abuse of notation, also write~$w\colon \R P^\infty\to K(\Z_2,2)$ for a map inducing the cohomology class. Consider the following diagram
\[
\begin{tikzcd}[column sep = scriptsize]
\BTOPSpin\ar[r,"i"] \ar[d,"="]&B \arrow[r, "q"] \arrow[d, "p"]
\arrow[dr, phantom, "\scalebox{1.5}{$\lrcorner$}" , very near start, color=black]
& \R P^\infty \arrow[d, "w"] \\
\BTOPSpin\ar[r, "p\circ i"] &\BSTOP \arrow[r, "w_2"] & K(\Z_2,2).
\end{tikzcd}
\]
The CW complex~$B$, and maps~$p,q$, are chosen to make the right-hand square a homotopy pullback. 
The map~$w_2$ is chosen to be a fibration, with fibre~$\BTOPSpin$. 
This implies the map~$q$ is also a fibration. 
We choose the space~$B$ so that the map~$i$ is the inclusion of a fibre of~$q$, and so that the left-hand downward map is indeed an equality. 
We note, for later use, that $\pi_2(B)=0$; this may be readily computed from the long exact sequence of the fibration~$q$ and the fact that $\BTOPSpin$ is $2$-connected.
In particular, this means that a map $f\colon M\to B$ is $2$-connected if and only if it is an isomorphism on $\pi_1$.

\medbreak

The following justifies that~$(B,p)$ is the correct normal~$1$-type for us.

\begin{proposition}[{\cite[Theorem~2.2.1(b) (I)]{TeichnerThesis}}]
\label{prop:Normal1Type}
Let~$M$ be an almost spin~$4$-manifold with fundamental group~$\Z_2$. 
Then the fibration~$(B,p)$ is a model for the normal~$1$-type of~$M$.
\end{proposition}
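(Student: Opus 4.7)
The plan is to unpack Kreck's definition of the normal $1$-type and verify the two defining conditions for the given fibration $(B,p)$: first, that the homotopy fibre of $p$ has trivial homotopy groups in all degrees $\geq 2$, and second, that the stable normal Gauss map $\nu\colon M\to \BSTOP$ admits a $2$-connected lift $\bar\nu\colon M\to B$ through $p$.

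For the first condition, since the right-hand square of the diagram is a homotopy pullback, the fibre of $p$ is homotopy equivalent to the fibre $F$ of $w\colon \R P^\infty\to K(\Z_2,2)$. Using the long exact sequence of this fibration, together with the facts that $\pi_n(\R P^\infty)=0$ for $n\geq 2$ and $\pi_n(K(\Z_2,2))=0$ for $n\neq 2$, I would read off $\pi_n(F)=0$ for all $n\geq 2$, exactly as required.

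For the second condition, I would build the lift by invoking the universal property of the homotopy pullback. Specifying such a lift $\bar\nu$ of $\nu$ amounts to giving a map $M\to \R P^\infty$ together with a homotopy in $K(\Z_2,2)$ between its post-composition with $w$ and $w_2\circ\nu$. For the first piece of data I take $c$, and the required homotopy exists precisely because $w$ was chosen so that $c^*(w)=w_2(M)$ in $H^2(M;\Z_2)$. To verify that the resulting lift $\bar\nu$ is $2$-connected, the remark in the excerpt just preceding the proposition reduces the problem to showing that $\bar\nu$ induces an isomorphism on $\pi_1$. Now $q\circ\bar\nu$ is homotopic to $c$, which induces the chosen isomorphism $\pi_1(M)\cong\Z_2$, while the long exact sequence of the fibration $q$ (using that its fibre $\BTOPSpin$ is $2$-connected) shows that $q_*\colon \pi_1(B)\to \pi_1(\R P^\infty)=\Z_2$ is itself an isomorphism; combining these gives the required isomorphism $\bar\nu_*$ on $\pi_1$.

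The only subtle point is the translation between the universal property of the homotopy pullback and the concrete statement that $q\circ\bar\nu$ can be arranged to be genuinely homotopic to $c$; this is standard and poses no real obstacle, so I expect the verification to be essentially formal once the pullback description of $B$ is unpacked.
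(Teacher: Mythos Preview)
Your verification is correct, and there is nothing to compare against: the paper does not supply its own proof of this proposition but simply cites Teichner's thesis. Your argument is the standard direct check of Kreck's two defining conditions for a normal $1$-type, using exactly the pullback description and the observation $\pi_2(B)=0$ that the paper has already set up.
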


Using this normal~$1$-type we can begin the modified surgery programme for producing homeomorphisms between~$\Z_2$-sphere exteriors. We first obtain a normal~$(B,p)$-bordism between normal~$1$-smoothings.

\begin{proposition}
\label{prop:StablyHomeo}
Let~$F_0$ and~$F_1$ be~$\Z_2$-spheres with $[F_i]=2\in H_2(\C P^2)$, and let~$f \colon \partial X_{0} \to \partial X_{1}$ be any orientation-preserving homeomorphism. Then there exist normal~$1$-smoothings~$(X_{i}, \ol{\nu}_i)$ for~$i=0,1$, that are bordant over their normal~$1$-type, relative to~$f$.
\end{proposition}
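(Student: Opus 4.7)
The plan is to reduce the statement to the vanishing of the class of a single closed 4-manifold in the bordism group $\Omega_4(B,p)$, and then to establish that vanishing by analyzing the group and checking invariants. Set $M := X_0 \cup_f -X_1$.

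By Proposition~\ref{prop:UnionSpin}, the closed 4-manifold $M$ is almost spin with $\pi_1(M) \cong \Z_2$, so by Proposition~\ref{prop:Normal1Type} its normal $1$-type is again $(B,p)$. I would pick any normal $1$-smoothing $\ol{\nu}_M \colon M \to B$ and simply define $\ol{\nu}_i := \ol{\nu}_M|_{X_i}$. Since $\pi_2(B) = 0$ and $\pi_1(X_i) \to \pi_1(M)$ is an isomorphism by Proposition~\ref{prop:UnionSpin}, each $\ol{\nu}_i$ is automatically $2$-connected, hence a normal $1$-smoothing of $X_i$; by construction, the two restrictions agree on the common boundary via $f$. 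A null-bordism $(W, \ol{\nu}_W)$ of $(M, \ol{\nu}_M)$, viewed with its boundary decomposition $\partial W = X_0 \cup_f -X_1$, is then precisely a $(B,p)$-bordism between $(X_0, \ol{\nu}_0)$ and $(X_1, \ol{\nu}_1)$ relative to $f$. The proof therefore reduces to showing $[M, \ol{\nu}_M] = 0$ in $\Omega_4(B,p)$.

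For the vanishing, I would analyze $\Omega_4(B,p)$ via the Atiyah-Hirzebruch spectral sequence for the fibration $q \colon B \to \R P^\infty$ with fibre $\BTOPSpin$, identifying the group with a $w$-twisted topological spin bordism of $\R P^\infty$ (in the spirit of the $\pi_1 = \Z_2$ calculations in~\cite{TeichnerThesis}). Classes are expected to be detected by the signature of $M$, the signature of its universal cover $\widetilde{M}$, and possibly a residual $\Z_2$-valued invariant. Since $\partial X_i = L(4,1)$ and $\partial \widetilde{X}_i = L(2,1)$ are rational homology $3$-spheres, Novikov additivity yields $\sigma(M) = \sigma(X_0) - \sigma(X_1)$ and $\sigma(\widetilde{M}) = \sigma(\widetilde{X}_0) - \sigma(\widetilde{X}_1)$. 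A direct computation gives $\sigma(X_i) = \sigma(\C P^2) - \sigma(\ol{\nu}(F_i)) = 1-1 = 0$, while Proposition~\ref{prop:IntersectionForm} gives $\sigma(\widetilde{X}_i) = -1$. Both signatures of $M$ therefore vanish.

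The main obstacle is the complete determination of $\Omega_4(B,p)$ and the verification that any further (torsion) invariants also vanish on $M$. I expect any residual invariant to cancel by the symmetry of the construction: Corollary~\ref{cor:EquivariantIntersectionForm} shows that $X_0$ and $X_1$ share the same equivariant intersection form, so $M$ behaves like a ``formal difference'' of two copies of the same building block, and any additive bordism invariant should cancel between the two sides.
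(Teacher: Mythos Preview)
Your approach is essentially the same as the paper's: form $M = X_0 \cup_f -X_1$, restrict a normal $1$-smoothing of $M$ to obtain compatible smoothings on the $X_i$ (using $\pi_2(B)=0$ and Proposition~\ref{prop:UnionSpin}), and reduce to showing $[M,\ol{\nu}_M]=0$ in $\Omega_4(B,p)$.

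The gap is in the last step, where you stop at a heuristic. The paper quotes~\cite[Theorem~4.4.4]{TeichnerThesis} and~\cite[Proposition~4]{TeichnerOnTheSignature} for the precise identification
\[
\Omega_4(B,p)\cong \bigl(8\Z\oplus H_4(\Z_2;\Z)\bigr)\oplus \Z_2 \cong 8\Z\oplus \Z_2,
\]
with the summands detected by $\sigma(M)$ and the Kirby--Siebenmann invariant $\ks(M)$. Since $H_4(\Z_2;\Z)=0$, the signature of the universal cover is not an independent invariant here, so your computation of $\sigma(\widetilde{M})$ is unnecessary. What you are missing is the identification of the $\Z_2$-summand as $\ks$ and its explicit vanishing: additivity of $\ks$ (e.g.~\cite[Theorem~8.2]{FriedlNagelOrsonPowell}) applied to $\C P^2=X_i\cup \overline{\nu}(F_i)$ gives $\ks(X_i)=0$ (since $\C P^2$ is smooth), and then additivity applied to $M$ gives $\ks(M)=0$. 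Your ``symmetry'' heuristic is correct in spirit precisely because $\ks$ is additive, but the sentence ``any additive bordism invariant should cancel because the equivariant intersection forms agree'' is not a proof: a priori a bordism invariant of $X_i$ need not be determined by its intersection form, and you need to actually compute it.
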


\begin{proof}
Write~$(B,p)$ for the normal~$1$-type described in Proposition~\ref{prop:Normal1Type}. By Proposition~\ref{prop:UnionSpin}, $M:=X_{0} \cup_{f} -X_{1}$ is almost spin and has fundamental group~$\Z_2$. Choose a normal~$1$-smoothing~$\ol{\nu}\colon M\to B$, in other words a~$2$-connected normal~$B$-structure.
For~$i=0,1$, denote by~$\ol{\nu_i}\colon X_{i}\to B$ the restriction of~$\ol{\nu}$, and note this is a normal~$B$-structure. 
We argue that the~$\ol{\nu_i}$ are moreover normal $1$-smoothings. As noted above, because $\pi_2(B)=0$,  the $2$-connectivity of $\ol{\nu_i}$ is equivalent to the statement that~$\ol{\nu_i}$ is an isomorphism on~$\pi_1$. 
But~$\ol{\nu}$ is an isomorphism on~$\pi_1$ and, by Proposition~\ref{prop:UnionSpin}, so are the inclusions~$X_i\to M$. Thus we may conclude that~$(X_i,\ol{\nu}_i)$ is a normal~$1$-smoothing for~$i=0,1$, as claimed.

Using these normal~$1$-smoothings, the proposition now reduces to proving that~$(M,\overline{\nu})$ vanishes in the bordism group~$\Omega_4(B,p)$. 

A spectral sequence calculation~\cite[Theorem~4.4.4]{TeichnerThesis}, combined with~\cite[Proposition~4]{TeichnerOnTheSignature}, shows that
\[
\Omega_4(B,p)\cong \left( 8\Z\oplus H_4(\Z_2;\Z)\right) \oplus \Z_2\cong 8\Z\oplus\Z_2,
\]
where the final isomorphism uses that $H_4(\Z_2;\Z)=0$. Under these isomorphisms, the bordism class~$[(M,\overline{\nu})]$ is given in the first summand by the signature~$\sigma(M)$ and in the second by the Kirby-Siebenmann invariant $\ks(M)$. 
As~$H_2(X_i)=0$, in particular~$\sigma(X_{i}) =0$, so by Novikov additivity,~$\sigma(M)=0$.
By additivity of~$\ks$ (see e.g.~\cite[Theorem 8.2]{FriedlNagelOrsonPowell}), we deduce from~$\C P^2=X_i\cup\overline{\nu}(F_i)$ that~$\ks(X_i)=0$. Applying additivity again, implies that~$\ks(M)=0$.
It follows that~$(M,\overline{\nu})$ is~$(B,p)$-null-bordant, as claimed.
\end{proof}

\subsection{The analysis of the monoid.}
\label{sub:Monoid}

We recall some facts from Kreck's modified surgery theory~\cite{KreckSurgeryAndDuality,CrowleySixt}.
We say that~$(W,\overline{\nu},M_0,\overline{\nu}_0,M_1,\overline{\nu}_1)$ is a \emph{modified surgery problem} if~$M_0$ and~$M_1$ are~$4$-manifolds with normal~$1$-type~$(B,p)$,  the~$\overline{\nu}_i \colon M_i \to B$ are normal~$1$-smoothings, and~$(W,\overline{\nu})$ is a~$(B,p)$-cobordism between~$(M_0,\overline{\nu}_0)$ and ~$(M_1,\overline{\nu}_1)$.
Kreck defines an obstruction to~$(W,\overline{\nu})$ being~$(B,p)$-bordant rel.~boundary to an~$s$-cobordism~\cite[Theorem 4]{KreckSurgeryAndDuality}.
We describe this obstruction very briefly referring to~\cite[Sections 6 and 7]{KreckSurgeryAndDuality} for more details as well as to~\cite[Sections 3, 7 and 8]{ConwayOrsonPowell} for an exposition resembling this one.
We assume some familiarity with quadratic forms and their lagrangians.
In what follows,  all modules are assumed to be stably free.

\begin{itemize}[leftmargin=*]\setlength\itemsep{0em}
\item For every group~$\pi$ there is a monoid~$\ell_5(\Z[\pi])$ whose elements are equivalence classes of triples~$((M,\psi);F,V)$ where~$(M,\psi)$ is a quadratic form over~$\Z[\pi]$,~$F \subseteq  M$ is a lagrangian and~$V \subseteq  M$ is a half-rank direct summand.
These triples are called \emph{quasiformations}.
We refer to~\cite{CrowleySixt, ConwayOrsonPowell} for further details (in particular for questions about Whitehead torsion) but note that an element~$x \in \ell_5(\Z[\pi])$ is called \emph{elementary} if it is represented by a quasiformation~$((M,\psi);F,V)$ where the inclusions induce an isomorphism~$F \oplus V \cong M$.
Note that in our case $\pi=\Z_2$ has vanishing Whitehead group which is why we do not concern ourselves with Whitehead torsion.
\item Given a modified surgery problem~$(W,\overline{\nu},M_0,\overline{\nu}_0,M_1,\overline{\nu}_1)$,  Kreck~\cite[Theorem 4]{KreckSurgeryAndDuality} defines a class~$\Theta(W,\overline{\nu}) \in \ell_5(\Z[\pi_1(B)])$, called the \emph{modified surgery obstruction}, that only depends on the~$(B,p)$-bordism rel. boundary class of~$(W,\overline{\nu})$ and that is elementary if and only if~$(W,\overline{\nu})$ is~$(B,p)$-bordant rel.~boundary to an~$s$-cobordism. 
\item We recall more facts about the monoid in order to describe a subset of~$\ell_5(\Z[\pi_1(B)])$ that contains the modified surgery obstruction.
Two quadratic forms~$(P,\psi)$ and~$(P',\psi')$ are \emph{$0$-stably equivalent} if there are zero forms~$(Q,0)$ and~$(Q',0)$ and an isometry 
$$(P,\psi) \oplus (Q,0) \cong (P',\psi') \oplus (Q',0).$$
For every quadratic form~$v'=(V',\theta')$ over~$\Z[\pi]$,  we write~$\ell_5(v') \subseteq  \ell_5(\Z[\pi])$ for the subset of equivalence classes of quasiformations $((M,\psi);F,V)$ such that the \emph{induced forms}~$(V,\psi|_{V \times V})$ and~$(V^\perp,\psi|_{V^\perp \times V^\perp})$ are~$0$-stably equivalent to~$(V',\theta')$.
For every~$0$-stable equivalence class of a quadratic form~$v'=(V',\theta')$,  the subset~$\ell_5(v')\subset \ell_5(\Z[\pi])$ contains an elementary class~\cite[Corollary 5.3]{CrowleySixt}; in particular it is nonempty.
\item Next we recall how this algebra relates to the modified surgery obstruction.
Set $\pi:=\pi_1(B)$ and write 
$$K\pi_2(M_i):=\ker((\overline{\nu}_i)_* \colon \pi_2(M_i) \to \pi_2(B)).$$
Intersections and self-intersections in~$M_i$ define a quadratic form~$(K\pi_2(M_i),\psi_{M_i})$ called the \emph{Wall form} of~$(M_i, \overline{\nu}_i)$~\cite[Section 5]{KreckSurgeryAndDuality}.
Write~$\lambda_{M_i}$ for the symmetrisation of~$\psi_{M_i}$ (this agrees with the restriction of the equivariant intersection form on~$\pi_2(M_i) \cong H_2(M_i;\Z[\pi])$).
For simplicity, assume that~$\lambda_{M_i}$ is nondegenerate.
Given a free~$\Z[\pi]$-module~$S_i$ and a surjection~$\varpi_i \colon S_i \twoheadrightarrow K \pi_2 (M_i)$, the pull-back of the (nondegenerate) Wall form~$(K\pi_2(M_i),\psi_{M_i})$ by~$\varpi_i$ is called a {\em free Wall form} of~$(M_i, \overline{\nu}_i)$.

As explained in~\cite{CrowleySixt} (see also~\cite[Section 8]{ConwayOrsonPowell}),  another result of Kreck~\cite[Proposition 8]{KreckSurgeryAndDuality} implies that if~$(W,\overline{\nu},M_0,\overline{\nu}_0,M_1,\overline{\nu}_1)$ is a modified surgery problem with modified surgery obstruction~$\Theta(W,\overline{\nu})\in \ell_5(\Z[\pi])$,  then for any choice of free Wall forms~$v(\overline{\nu}_0)$ and~$v(\overline{\nu}_1)$ for~$(M_0,\overline{\nu}_0)$ and~$(M_1,\overline{\nu}_1)$ 
that are~$0$-stably isometric to a form~$v'$,
we have
\[\Theta(W,\overline{\nu}) \in \ell_{5}(v').\]
\end{itemize}
\begin{remark}
Given $4$-manifolds $M_0$ and $M_1$ that are bordant over their normal $1$-type and have isometric equivariant intersection,
the take-away is that if~$(M_0,\ol{\nu}_0)$ and~$(M_1,\ol{\nu}_1)$ have nondegenerate Wall forms (and therefore isometric free Wall forms), then a strategy for showing that~$M_0$ and~$M_1$ are $s$-cobordant is to find a quadratic form $v'$ isometric to the free Wall forms of~$M_0$ and~$M_1$ and to show that the subset~$\ell_{5}(v') \subset \ell_5(\Z[\pi])$ is trivial.
\end{remark}

Let~$F \subseteq  \C P^2$ be a~$\Z_2$-sphere.
We describe a specific free Wall form for the exterior~$X$.
We calculated in Proposition~\ref{prop:IntersectionForm} that~$Q_{\widetilde{X}}(x,y)=-2xy$; in particular~$Q_{\widetilde{X}}$ is even and nondegenerate. The unique nondegenerate quadratic form over~$\Z$ refining~$Q_{\widetilde{X}}$ is~$(\Z,\theta_{\widetilde{X}})$, where~$\theta_{\widetilde{X}}(x,y)=-xy$.
By Proposition~\ref{prop:H2UnivCover} we have~$H_2(\widetilde{X}) \cong\Z_-$ and, remembering the~$\Z[\Z_2]$-module structure in order to identify~$\Z_-=\Z[\Z_2] \otimes_{\Z[\Z_2]} \Z_-$, we will think of the integral quadratic form~$\theta_{\widetilde{X}}$ as a pairing:
\begin{equation}\label{eqn:theta-nd-rho}
\theta_{\widetilde{X}} \colon (\Z[\Z_2] \otimes_{\Z[\Z_2]} \Z_-) \times (\Z[\Z_2] \otimes_{\Z[\Z_2]} \Z_-) \to \Z.
\end{equation}
Occasionally we will write this form as~$\theta_{\widetilde{X}}=(-1)$.
Using our calculation of the equivariant intersection form from Corollary~\ref{cor:EquivariantIntersectionForm}, the next proposition establishes a formula for a free Wall form for~$X$.

\begin{proposition}
\label{prop:SurfaceData}
Let~$F \subseteq \C P^2$ be a~$\Z_2$-sphere and let~$\overline{\nu} \colon X \to B$ be a normal~$1$-smoothing.
Then a free Wall form for~$X$ is given by~$(\Z[\Z_2],\vartheta)$, where the quadratic form~$\vartheta$ is the pairing
\[
 \Z[\Z_2] \times \Z[\Z_2]\to \Z[\Z_2],
\qquad (x,y)\mapsto
(1-T)\theta_{\widetilde{X}}(x \otimes 1,y\otimes 1).
\]
We write~$\vartheta=(1-T)\theta_{\widetilde{X}}=(1-T)(-1).$
\end{proposition}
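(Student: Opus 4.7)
The plan is to identify $K\pi_2(X)$, fix a convenient free presentation, and then compute the resulting free Wall form by transferring the integer quadratic refinement on the spin universal cover up to a $\Z[\Z_2]$-valued form.

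First, I would determine $K\pi_2(X)$. Since $\pi_2(B)=0$ (noted in Section~\ref{sub:Normal1Type}), the Hurewicz map together with Proposition~\ref{prop:H2UnivCover} yields that $K\pi_2(X)=\pi_2(X)=H_2(\widetilde X)\cong \Z_-$ as a $\Z[\Z_2]$-module. Because $\Z_-$ is cyclic over $\Z[\Z_2]$, generated by the element $1\in \Z_-$ appearing in~\eqref{eqn:theta-nd-rho}, I would take the free module $S=\Z[\Z_2]$ with surjection $\varpi\colon \Z[\Z_2]\twoheadrightarrow \Z_-$ sending $1\mapsto 1$. The free Wall form is then the pull-back $\vartheta:=\varpi^\ast \psi_X$ on the rank-one module $\Z[\Z_2]$, and is determined by its value at $(1,1)$.

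For the computation, I would use that $\widetilde X$ is spin by Proposition~\ref{prop:IntersectionForm}, so that the even form $Q_{\widetilde X}=(-2)$ admits the canonical integer quadratic refinement $\theta_{\widetilde X}=(-1)$ over $\Z$. The Wall self-intersection of a class in $\pi_2(X)$ is computed geometrically by representing it by an immersed sphere, with the normally $\Z_2$-valued self-intersection contribution lifted to $\Z$ using the spin structure on $\widetilde X$. Summing intersection numbers of the lift with its $T$-translate weighted by group elements then gives
\[
\vartheta(1,1) \,=\, \theta_{\widetilde X}(1\otimes 1,\, 1\otimes 1)\cdot 1 \,+\, \theta_{\widetilde X}(1\otimes 1,\, T\cdot(1\otimes 1))\cdot T \,=\, -1 + T,
\]
which rewrites as the stated formula $(1-T)\theta_{\widetilde X}(1\otimes 1, 1\otimes 1)=(1-T)(-1)$. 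As a consistency check, the symmetrisation $\vartheta(1,1) + \overline{\vartheta(1,1)} = 2(T-1) = -2(1-T)$ matches the pulled-back equivariant intersection form of Corollary~\ref{cor:EquivariantIntersectionForm}.

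The main obstacle is pinning down the quadratic refinement rather than only its symmetrisation: distinct $\Z[\Z_2]$-quadratic refinements can share the same underlying bilinear form, so $\lambda_X$ by itself does not determine $\vartheta$. The extra input needed is the spin structure on $\widetilde X$, carried by the normal $1$-smoothing $\overline{\nu}\colon X\to B$, and one must use it with some care—either by invoking the algebraic description of Wall forms when the universal cover is spin, or via an explicit geometric double-point count on an immersed generator of $\pi_2(X)$ together with the canonical framing coming from the spin structure.
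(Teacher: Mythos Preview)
Your proposal is correct and follows essentially the same approach as the paper, which simply cites \cite[Proposition~10.2]{ConwayOrsonPowell} and notes that the argument there carries over to the almost spin setting (since $\pi_2(B)=0$ and $\lambda_X$ has been computed in Corollary~\ref{cor:EquivariantIntersectionForm}); you have effectively unpacked that citation.

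One remark on the ``main obstacle'' you raise: in this particular rank-one situation it is not actually an obstacle. Any $\Z[\Z_2]$-sesquilinear pairing on $\Z_-$ must send the generator to an element of the form $c(1-T)$ for some $c\in\Z$ (sesquilinearity together with $T\cdot 1=-1$ forces $(1+T)s(1,1)=0$), and since the involution on $\Z[\Z_2]$ is trivial every such pairing is already symmetric. Hence on $\Z_-$ there is a \emph{unique} quadratic refinement of $\lambda_X$, namely $\psi_X(1,1)=\tfrac{1}{2}\lambda_X(1,1)=-(1-T)$, and your consistency check with the symmetrisation is in fact a complete determination. So no separate geometric double-point count or careful use of the spin structure is required beyond knowing that the Wall form is a genuine quadratic form with symmetrisation $\lambda_X$.
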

\begin{proof}
The proof is entirely analogous as~\cite[Proposition 10.2]{ConwayOrsonPowell}.
Note that~\cite[Proposition 10.2]{ConwayOrsonPowell} is concerned with spin manifolds, but the proof is identical for almost spin manifolds, recalling, as noted earlier, in our case it is also true that~$\pi_2(B)=0$ and that we've calculated the equivariant intersection form $\lambda_X$ in Corollary~\ref{cor:EquivariantIntersectionForm}.
Also,  in~\cite[Proposition~10.2]{ConwayOrsonPowell} the Wall form was degenerate so it was necessary to mod out by the radical; in our setting the Wall form is nondegenerate and so~\cite[Proposition~10.2]{ConwayOrsonPowell} sould be read with~$\theta=\theta^{\nd}$.
\end{proof}

Combining our recollections of modified surgery with Proposition~\ref{prop:SurfaceData} implies that, given a modified surgery problem~$(W,\overline{\nu},X_0,\overline{\nu}_0,X_1,\overline{\nu}_1)$, the modified surgery obstruction~$\theta(W,\overline{\nu})$ belongs to~$\ell_5(\Z[\Z_2],(1-T)\theta)$ with~$\theta=(-1)$.
In order to show that all the elements of this subset are elementary,  we use the following criterion.

\begin{proposition}[{\cite[Proposition 9.15]{ConwayOrsonPowell}}]
\label{prop:NewElementaryCriterion}
If~$(\Z^h,\theta)$ is a nondegenerate quadratic form over~$\Z$ such that~$\ell_5(\Z^h,2\theta)$ is a singleton, then every~$\Theta \in \ell_5(\Z[\Z_2]^h,(1-T)\theta)$ is elementary.
\end{proposition}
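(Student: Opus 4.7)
The plan is to reduce the question to an analogous statement over~$\Z$ via a change-of-rings argument. Given a class $\Theta \in \ell_5(\Z[\Z_2]^h, (1-T)\theta)$ represented by a quasiformation $((M,\psi); F, V)$, the essential observation is that the ring map $\Z[\Z_2] \to \Z$ sending $T \mapsto -1$ carries the element $(1-T)$ to $2$. Applying the functor $-\otimes_{\Z[\Z_2]} \Z_-$ to the whole quasiformation should produce a quasiformation over~$\Z$ whose induced forms on the images of $V$ and $V^\perp$ are $0$-stably isometric to $(\Z^h, 2\theta)$, yielding a class $\Theta_- \in \ell_5(\Z^h, 2\theta)$.

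By the singleton hypothesis, combined with the general fact that the subset $\ell_5(\Z^h, 2\theta)$ always contains an elementary element, the class $\Theta_-$ must itself be elementary. Concretely, this means that the natural inclusion-induced map $(F \oplus V) \otimes_{\Z[\Z_2]} \Z_- \to M \otimes_{\Z[\Z_2]} \Z_-$ is an isomorphism. The remaining task is then to promote this $\Z$-level elementarity to the desired $\Z[\Z_2]$-level splitting $F \oplus V \cong M$. The natural tool is the short exact sequence of $\Z[\Z_2]$-modules
\[
0 \to \Z \xrightarrow{\,\cdot(1+T)\,} \Z[\Z_2] \to \Z_- \to 0,
\]
together with its symmetric companion involving augmentation. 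Because $(1-T)\theta$ is annihilated by the augmentation $T\mapsto 1$, the augmentation-tensored piece of the data is the zero form and therefore contributes no obstruction to the splitting; all of the interesting content lives in the $\Z_-$-tensored piece, where the splitting is already in hand.

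The main obstacle will be the careful bookkeeping of this lift: one must check that tensoring with $\Z_-$ actually preserves the lagrangian condition on $F$ and the half-rank direct summand condition on $V$, that the resulting~$\Z$-quasiformation genuinely represents a class in $\ell_5(\Z^h,2\theta)$, and finally that an elementary splitting after $-\otimes \Z_-$ can be upgraded to an honest decomposition over $\Z[\Z_2]$. I would expect the final upgrading step to follow from a Nakayama-type argument using the nondegeneracy of the underlying integral form $\theta$, while the torsion subtleties that can arise in this type of lifting disappear because $\mathrm{Wh}(\Z_2)=0$. This strategy matches the framework of~\cite[Proposition~9.15]{ConwayOrsonPowell}, which we invoke here.
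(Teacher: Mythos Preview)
The paper does not prove this proposition at all: it is quoted verbatim from~\cite[Proposition~9.15]{ConwayOrsonPowell} and used as a black box, so there is no in-paper argument to compare your proposal against. Your sketch is a reasonable heuristic for the change-of-rings strategy one expects behind that citation (send $T\mapsto -1$, land in $\ell_5(\Z^h,2\theta)$, use the singleton hypothesis, then lift), and indeed you yourself close by invoking the same reference. If the goal is simply to match the paper, the correct move is to do exactly what the paper does: state the proposition with the citation and give no proof.

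That said, as a standalone argument your sketch has a genuine gap at the lifting step. Knowing that $F\otimes\Z_-\oplus V\otimes\Z_-\to M\otimes\Z_-$ is an isomorphism does not by itself force $F\oplus V\to M$ to be an isomorphism over $\Z[\Z_2]$: the kernel and cokernel could a priori be nonzero $\Z[\Z_2]$-modules annihilated by $1-T$ (i.e.\ supported on the augmentation side), and your appeal to ``Nakayama-type'' reasoning is not justified since $\Z_-$ is not $\Z[\Z_2]$ modulo its Jacobson radical. The actual argument in~\cite{ConwayOrsonPowell} requires more than this---one must also control what happens on the $T\mapsto 1$ side and use the specific structure of quasiformations with induced form $(1-T)\theta$, not merely the vanishing of $(1-T)\theta$ under augmentation. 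So if you intend to reproduce the proof rather than cite it, the promotion step needs substantially more detail than you have indicated.
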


In the case of~$\Z_2$-spheres we have~$\theta=(-1)$ and Proposition~\ref{prop:NewElementaryCriterion} therefore leads us to study~$\ell_5(\Z,-2)$. 
More broadly, Proposition~\ref{prop:NewElementaryCriterion} implies that the study of the~$\ell$-monoid over~$\Z$ can be helpful to study the~$\ell$-monoid over~$\Z[\Z_2]$.
As we briefly recall, the~$\ell$-monoid over~$\Z$ is actually quite approchable.

Given a quadratic form~$v=(V,\theta)$ over $\Z$, the monoid~$\ell_5(v)$ can be recast in terms of the isometries of the boundary linking form of~$v$.
We recall the relevant terminology.
In what follows we write $\lambda=\theta+\theta^*$ for the symmetrisation of a quadratic form~$\theta$ and $\widehat{\lambda} \colon V \to V^*$ for its adjoint.
\begin{itemize}[leftmargin=*]\setlength\itemsep{0em}
\item 
The \emph{boundary split quadratic linking form} of a quadratic form~$(V,\theta)$ over $\Z$ is the split quadratic linking form~$\partial (V,\theta):=(\operatorname{coker}(\widehat{\lambda}),\partial \lambda,\partial \theta)$ with 
\begin{align*}
&\partial \lambda \colon \coker(\widehat{\lambda}) \times \coker(\widehat{\lambda})\to \Q/\Z, ([x],[y])\mapsto \frac{1}{s}y(z),  \\
& \partial \theta \colon \operatorname{coker}(\widehat{\lambda}) \to \Q/\Z, ([x]),
z \mapsto \frac{1}{s^2}\theta(z,z),
\end{align*}
where~$sx=\widehat{\lambda}(z)$ for~$z \in V$ and~$s \in \Z \setminus \lbrace 0 \rbrace$.
\item An \emph{isometry} of~$\partial (V,\theta)$ is an isomorphism~$f \colon \coker(\widehat{\lambda}) \xrightarrow{\cong} \coker(\widehat{\lambda})~$ such that
$$\partial \theta(f(x))=\partial \theta(x)$$
 for every~$x\in \coker(\widehat{\lambda})$.
The group of automorphisms of~$\partial v=\partial (V,\theta)$ is 
$$ \Aut(\partial v):=\lbrace f \colon \coker(\widehat{\lambda})  \to \coker(\widehat{\lambda}) \mid f \text{ is an isometry of } \partial v \rbrace.$$
\item 
Given an automorphism~$h \colon (V,\theta) \to (V,\theta)$, one verifies that~$(h^*)^{-1} \colon V^* \to V^*$ induces an isomorphism~$\coker(\widehat{\lambda}) \to \coker(\widehat{\lambda})$ on the cokernels.
This isomorphism, which we denote by~$\partial h:=(h^*)^{-1}$, is an isometry of the boundary split quadratic linking  forms.
This leads to a left action
$$ \Aut(V,\theta) \times \Aut(V,\theta) \curvearrowright \Aut(\partial(V,\theta)), \quad  (g,h) \cdot f=\partial h \circ f\circ \partial g^{-1}.~$$
The \emph{boundary automorphism set} is then defined as 
$$\operatorname{bAut}(v)=:\Aut(\partial v)/\Aut(v) \times \Aut(v).$$
\end{itemize}

The following theorem is due to Crowley-Sixt~\cite{CrowleySixt}; see also~\cite[Section 7.4]{ConwayOrsonPowell} for a discussion of the conventions surrounding this result.

\begin{theorem}[{\cite[Section 6.3]{CrowleySixt}}]
\label{thm:CS}
Given a nondegenerate quadratic form~$v=(V,\theta)$ over~$\Z$, there is a bijection between~$\ell_5(v)$ and~$\operatorname{bAut}(v).$ 
\end{theorem}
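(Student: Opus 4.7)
The plan is to construct explicit maps in both directions between $\ell_5(v)$ and $\operatorname{bAut}(v)$ and to verify they are mutually inverse. The underlying idea is familiar from surgery theory: the boundary linking form is a functor on nondegenerate quadratic forms, and lagrangians in a form $M$ identify the boundaries of two complementary halves of $M$.

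\medskip

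First I would construct the forward map $\Phi\colon\ell_5(v)\to \operatorname{bAut}(v)$. Let $((M,\psi);F,V')$ represent a class in $\ell_5(v)$, so that both induced forms $(V',\psi|_{V'})$ and $(V'^\perp,\psi|_{V'^\perp})$ are $0$-stably equivalent to $v=(V,\theta)$. After stabilising $(M,\psi)$ by summands with zero form, which does not change the class in $\ell_5(v)$, I may choose isometries $\alpha\colon (V',\psi|_{V'})\cong v$ and $\beta\colon (V'^\perp,\psi|_{V'^\perp})\cong v$. The boundary functor then identifies $\partial(V',\psi|_{V'})$ and $\partial(V'^\perp,\psi|_{V'^\perp})$ with $\partial v$ via $\partial\alpha$ and $\partial\beta$; meanwhile the lagrangian $F$ provides a graph-like correspondence between $V'$ and $V'^\perp$ whose image under the boundary functor is an isomorphism of their boundary linking forms. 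Transporting this via $\partial\alpha$ and $\partial\beta$ yields an element of $\Aut(\partial v)$. Different choices of $\alpha,\beta$ change this isometry precisely by pre- and post-composition with elements of $\Aut(v)$, so the class in $\operatorname{bAut}(v)=\Aut(\partial v)/\bigl(\Aut(v)\times \Aut(v)\bigr)$ is well-defined. One also has to check that equivalent quasiformations give the same class, which follows since the boundary functor is invariant under stabilisation by hyperbolic and zero forms.

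\medskip

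Next I would construct $\Psi\colon \operatorname{bAut}(v)\to \ell_5(v)$ by a gluing construction. Given an isometry $f\colon \partial v\to \partial v$, one produces a nondegenerate quadratic form $(M,\psi)$ as the ``double of $v$ along $f$'': concretely, assemble $M$ as the pullback in the short exact sequences that define $\partial v$, using $f$ to glue the two copies of $V^*/\widehat{\lambda}(V)$. The underlying module of $M$ receives a natural nondegenerate quadratic refinement $\psi$ restricting to $\theta$ on each of the two copies of $V$, one of which serves as the half-rank summand $V'$. The antidiagonal copy of $V$ inside $M$ provides a candidate lagrangian $F$; one checks that $\psi$ vanishes on $F$ and that $F$ is indeed a half-rank direct summand. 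The ambiguity in lifting $f$ to the form level is absorbed exactly by the $\Aut(v)\times\Aut(v)$-action, so $\Psi$ descends to $\operatorname{bAut}(v)$.

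\medskip

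The final step is to show $\Phi\circ\Psi$ and $\Psi\circ\Phi$ are the identity. For $\Phi\circ\Psi$, one reads off that the boundary of the double is canonically identified with two copies of $\partial v$ glued by $f$, and the lagrangian $F$ recovers $f$ on the nose. For $\Psi\circ\Phi$, one must verify that the gluing of the boundaries of $(V',\psi|_{V'})$ and $(V'^\perp,\psi|_{V'^\perp})$ along the isometry induced by $F$ recovers $(M,\psi)$ up to the equivalence defining $\ell_5(v)$; this is where nondegeneracy of $v$ is essential, as it lets one split $M=V'\oplus V'^\perp$ up to stabilisation. The main obstacle is the bookkeeping of lifting ambiguities during the gluing construction: one needs the lifts to cancel correctly in the two round trips, and to verify that stabilisation by zero and hyperbolic forms does not affect the bijection.
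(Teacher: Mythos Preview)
The paper does not supply its own proof of this theorem: it is quoted as a black box from Crowley--Sixt~\cite[Section~6.3]{CrowleySixt}, with a pointer to~\cite[Section~7.4]{ConwayOrsonPowell} for conventions. So there is no in-paper argument to compare your proposal against.

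Your outline is broadly in the spirit of how Crowley--Sixt actually establish the bijection, namely by passing to boundary linking forms and reconstructing quasiformations by a gluing procedure. That said, two of your steps are imprecise in ways that hide nontrivial work. First, in the forward direction, a lagrangian~$F\subset M$ does not by itself furnish a ``graph-like correspondence between~$V'$ and~$V'^\perp$''; rather, one uses that the lagrangian gives a hyperbolic splitting~$M\cong F\oplus F^*$, and then compares how~$V'$ and~$V'^\perp$ sit against this splitting to extract an isometry of boundary linking forms. Second, in the reverse direction, the glued module~$M$ is a pullback sitting strictly between~$V\oplus V$ and~$V^*\oplus V^*$, so there is no literal ``antidiagonal copy of~$V$'' available; the lagrangian has to be produced more carefully, and checking that the resulting quasiformation lands in~$\ell_5(v)$ (i.e.\ that both induced forms are~$0$-stably~$v$) is not automatic. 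None of this is fatal to your strategy, but filling in these points amounts to reproducing the technical core of~\cite[Section~6]{CrowleySixt}, which is why the present paper simply cites the result.
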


This calculation allows us to determine~$\ell_5(\Z,-2)$. 
\begin{proposition}
\label{prop:Triviall5(2)}
The set~$\ell_5(\Z,-2)$ is a singleton.
\end{proposition}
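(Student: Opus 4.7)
The plan is to apply the Crowley--Sixt bijection of Theorem~\ref{thm:CS}, which identifies $\ell_5(v)$ with $\operatorname{bAut}(v) = \Aut(\partial v)/(\Aut(v) \times \Aut(v))$ for $v = (\Z, -2)$. So the task reduces to showing that the left action of $\Aut(v) \times \Aut(v)$ on $\Aut(\partial v)$ is transitive on a finite, easily described set. Since the underlying module has rank $1$, each of the ingredients is essentially a $1\times 1$ matrix calculation, and the whole argument is a small finite computation.

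First I would compute the boundary split quadratic linking form $\partial v$. The symmetrisation of $v = (\Z, -2)$ is $\lambda = (-4)$, whose adjoint $\widehat{\lambda}\colon \Z \to \Z$ is multiplication by $-4$, so $\coker(\widehat{\lambda}) \cong \Z_4$. Plugging into the formulas for $\partial \lambda$ and $\partial \theta$ (using $s=4$ and $z=-1$), one finds $\partial v \cong (\Z_4, \partial \lambda, \partial \theta)$ with $\partial \lambda([1],[1]) = -\tfrac{1}{4}$ and $\partial \theta([1]) = -\tfrac{1}{8}$ in $\Q/\Z$.

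Next I would identify the two automorphism groups. The abelian group automorphisms of $\Z_4$ are multiplication by $\pm 1$, and both preserve the quadratic refinement $\partial \theta$ since $\partial\theta([\pm 1]) = (\pm 1)^2 \partial \theta([1]) = -\tfrac{1}{8}$; hence $\Aut(\partial v) \cong \Z_2$. On the other hand, $\Aut(\Z, -2) = \{\pm 1\}$ since these are the only integers squaring to $1$. Finally, I would check transitivity: the map $h \mapsto \partial h$ sends $-1 \in \Aut(v)$ to multiplication by $-1$ on $\coker(\widehat{\lambda}) = \Z_4$, so acting on $f = \Id \in \Aut(\partial v)$ by $(g,h) = (\Id, -1)$ yields the nontrivial element of $\Aut(\partial v)$. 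Thus the action is transitive, $\operatorname{bAut}(v)$ is a singleton, and the proposition follows.

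There is no real obstacle in this argument: the proposition is the small base-case input that is then combined with Proposition~\ref{prop:NewElementaryCriterion} to handle the genuinely interesting $\Z[\Z_2]$-monoid. The only point requiring a moment of care is keeping track of signs and of the factor of $2$ between the quadratic form $\theta$ and its symmetrisation $\lambda$ when writing down $\widehat{\lambda}$ and the boundary linking form.
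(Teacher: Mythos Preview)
Your proof is correct and follows exactly the same approach as the paper: reduce via Theorem~\ref{thm:CS} to showing $\operatorname{bAut}(\Z,-2)$ is a singleton, then observe that $\Aut(\Z_4)=\{\pm 1\}$ and that both elements arise as boundaries of $\Aut(\Z,-2)=\{\pm 1\}$. The paper simply states this verification in one sentence, whereas you have helpfully written out the boundary linking form and checked the quadratic refinement explicitly.
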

\begin{proof}
By Theorem~\ref{thm:CS}, it suffices to prove that~$\operatorname{bAut}(\Z,-2)$ is a singleton.
This is a direct verification: there are only two isomorphisms of~$\Z_4$, namely multiplication by~$\pm 1$,  and these both arise as the boundary of elements in~$\Aut(\Z,-2)=\lbrace \pm 1 \rbrace$.
\end{proof}

We can now prove our main result.

\begin{theorem}
\label{thm:SurfacesWithBoundaryMain}
Let~$F_0,F_1\subseteq \C P^2$ be~$\Z_2$-spheres.
Then there is an orientation-preserving homeomorphism~$G\colon \C P^2\to \C P^2$ restricting to a homeomorphism~$F_0\to F_1$.
In the case that $[F_0]=[F_1]\in H_2(\C P^2)$, the orientation-preserving homeomorphism may be chosen to be isotopic to the identity, so that $F_0$ and $F_1$ are ambiently isotopic.
\end{theorem}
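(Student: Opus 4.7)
The plan is to execute the modified surgery programme outlined in the introduction, assembling the algebraic inputs from Sections~\ref{sec:AlgTop}--\ref{sec:Union} with the monoid analysis of this section. As a preliminary reduction: every $\Z_2$-sphere has homology class $\pm 2$ (since $\pi_1(X)\cong H_1(X)\cong \Z_{|d|}$ forces $|d|=2$), and complex conjugation on $\C P^2$ is an orientation-preserving self-homeomorphism acting by $-1$ on $H_2$. Composing with complex conjugation therefore reduces the existence statement to the case $[F_0]=[F_1]=2$, which is the setting of Section~\ref{sec:AlgTop}.

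Next, I would pick any orientation-preserving homeomorphism $f\colon \partial X_0\to \partial X_1$ (both boundaries are $L(4,1)$). By Proposition~\ref{prop:StablyHomeo} there exist normal $1$-smoothings $(X_i,\overline{\nu}_i)$ and a $(B,p)$-cobordism $(W,\overline{\nu})$ between them rel.~$f$, giving a modified surgery problem. By Proposition~\ref{prop:SurfaceData} and the discussion preceding it, the obstruction $\Theta(W,\overline{\nu})$ lies in $\ell_5(\Z[\Z_2],(1-T)(-1))$. Applying Proposition~\ref{prop:NewElementaryCriterion} with $\theta=(-1)$, together with Proposition~\ref{prop:Triviall5(2)}, shows that every element of this subset is elementary, so $\Theta(W,\overline{\nu})$ is elementary. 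Consequently $(W,\overline{\nu})$ is $(B,p)$-bordant rel.~boundary to an $s$-cobordism, and because $\Z_2$ is a good group, the Freedman-Quinn $5$-dimensional $s$-cobordism theorem yields an orientation-preserving homeomorphism $\Phi\colon X_0\to X_1$ extending $f$. Gluing $\Phi$ with the tubular neighbourhood extension of $f$ produced by Proposition~\ref{prop:UnionSpin}\eqref{item:1spin}, I obtain the desired $G\colon \C P^2\to \C P^2$ with $G(F_0)=F_1$, completing the first assertion.

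To upgrade to ambient isotopy when $[F_0]=[F_1]$, I would examine $G_*\in \Aut(H_2(\C P^2))=\{\pm 1\}$. If $G_*=-1$, I modify $G$ by precomposing with an orientation-preserving self-homeomorphism $H$ of $\C P^2$ satisfying $H(F_0)=F_0$ (as a set) and $H_*=-1$: such an $H$ can be produced by conjugating complex conjugation (which preserves the real algebraic $\Z_2$-sphere $V(x^2+y^2+z^2)$ setwise and acts as $-1$ on $H_2$) by any homeomorphism, furnished by the first assertion, that takes the algebraic sphere to $F_0$. After this adjustment $G_*=\mathrm{id}$ on $H_2(\C P^2)$, so the theorem of Kreck, Perron, and Quinn~\cite{MR561244,Perron,Quinn} guarantees $G$ is topologically isotopic to the identity, ambiently carrying $F_0$ to $F_1$. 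The main obstacle of the overall argument is the monoid analysis ensuring triviality of the surgery obstruction, already isolated in Propositions~\ref{prop:SurfaceData}, \ref{prop:NewElementaryCriterion}, and~\ref{prop:Triviall5(2)}; granting these, the assembly above is largely formal, with the only remaining technicality being the $H_2$-action arrangement just described.
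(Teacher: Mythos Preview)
Your proposal is correct and runs the same modified surgery programme as the paper: Proposition~\ref{prop:StablyHomeo} to get the $(B,p)$-bordism, Proposition~\ref{prop:SurfaceData} to land the obstruction in $\ell_5(\Z[\Z_2],(1-T)(-1))$, then Propositions~\ref{prop:NewElementaryCriterion} and~\ref{prop:Triviall5(2)} to conclude it is elementary, followed by the $s$-cobordism theorem and the extension over tubular neighbourhoods from Proposition~\ref{prop:UnionSpin}\eqref{item:1spin}.

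The one genuine difference is in how you arrange $G_*=\mathrm{id}$ on $H_2(\C P^2)$. The paper fixes parametrisations $\overline{\nu}(F_i)\cong D^2\widetilde{\times}_4 S^2$, first runs the argument with $f=\mathrm{id}_{L(4,1)}$, and if $G_*=-1$ goes back and replaces $f$ by the involution $\tau$ of $L(4,1)$, whose disc-bundle extension acts by $-1$ on $H_2$; this flips the sign of $G_*$. You instead leave $G$ alone and precompose with an auxiliary $H$ built from complex conjugation (which is orientation-preserving on $\C P^2$, acts by $-1$ on $H_2$, and preserves the real conic $V(x^2+y^2+z^2)$ setwise), conjugated via the already-established first assertion to a homeomorphism fixing $F_0$ setwise. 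Both are valid: the paper's method stays inside the surgery framework and avoids invoking the first assertion recursively, while yours avoids rerunning the surgery argument at the cost of importing an external geometric input. Your preliminary reduction to $[F_0]=[F_1]=2$ via complex conjugation is also a clean addition that the paper handles only implicitly (by declaring the $[F]=-2$ case ``entirely analogous'').
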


\begin{proof}
For $i=0,1$, fix a tubular neighbourhood $\overline{\nu}(F_i)\cong D^2\widetilde{\times}_4 S^2$, so that we obtain fixed identifications $\alpha_i\colon \C P^2\cong X_i\cup(D^2\widetilde{\times}_4 S^2)$. 
Under these identifications, let $f \colon \partial X_0 \to \partial X_1$ be given by $\Id_{L(4,1)}$. By Proposition~\ref{prop:StablyHomeo}, there are normal 1-smoothings~$\ol{\nu}_i \colon X_i \to B$ that are normally~$(B,p)$-bordant relative to~$f$. Write~$(W,\overline{\nu})$ for such a normal~$(B,p)$-bordism.
We obtain a modified surgery obstruction~$\Theta(W,\overline{\nu}) \in \ell_5(\Z[\Z_2])$.
By Proposition~\ref{prop:SurfaceData}, the~$\Z[\Z_2]$-quadratic form~$(\Z[\Z_2],(1-T)(-1))$ is a free Wall form for~$X_i$, and hence~$\Theta(W,\overline{\nu})$ lies in the subset~$\ell_5(\Z[\Z_2],(1-T)(-1))\subseteq \ell_5(\Z[\Z_2])$.
Proposition~\ref{prop:Triviall5(2)} shows that~$\ell_5(\Z,-2)$ is trivial, and thus Proposition~\ref{prop:NewElementaryCriterion} implies that~$\Theta(W,\overline{\nu})$ is elementary.
As the obstruction is elementary, Kreck's theorem~\cite{KreckSurgeryAndDuality} implies that~$(W,\overline{\nu})$ is normal~$(B,p)$-bordant rel.~boundary to an~$s$-cobordism.
As~$\Z_2$ is finite, it is ``good'', so the~$5$-dimensional~$s$-cobordism theorem~\cite[Theorem~7.1A]{FreedmanQuinn} applies. This shows that there exists an orientation-preserving homeomorphism~$X_0 \to X_1$, restricting to $f$ on the boundary. Extend $f$ to a homeomorphism~$\widehat{f} \colon\overline{\nu}(F_0)\to \overline{\nu}(F_0)$ (we may take this extension to be the identity map under the identifications $\overline{\nu}(F_i)\cong D^2\widetilde{\times}_4 S^2$, chosen earlier). We thus obtain an orientation-preserving homeomorphism~$G\colon \C P^2\to \C P^2$ restricting to a homeomorphism~$F_0\to F_1$.

Suppose that $[F_0]=[F_1]\in H_2(X)$. Suppose $G_*$ is identity map on $H_2(\C P^2)$. Then $G$ is isotopic to the identity on $\C P^2$, by a result of Perron~\cite{Perron} and Quinn~\cite[Theorem~1.1]{Quinn} (see also~\cite[Theorem~1]{MR561244}). Moreover, as~$F_0$ and $F_1$ are homologous and $G_*$ is the identity on $H_2(\C P^2)$, we have that the induced homeomorphism $G\colon F_0\to F_1$ is orientation-preserving, so that $F_0$ is ambiently isotopic to $F_1$ (rather than ambiently isotopic to the reversed orientation version of $F_1$).

Now suppose that $G_*$ is not the identity map on $H_2(\C P^2)$. We wish to force $G_*$ to be the identity map, so that the argument above can be used. For this, we will modify the construction in the proof above, as follows. Using the fixed parametrisations established at the beginning of the proof, we have that $G_*$ is the composition
\begin{equation}
\label{eq:sequence}
\begin{tikzcd}
H_2(\overline{\nu}(F_0))\ar[r, "(\alpha_0^{-1})_*","\cong"'] 
&H_2(D^2\widetilde{\times}_4 S^2) \ar[r, "\widehat{f}_*","\cong"'] 
&H_2(D^2\widetilde{\times}_4 S^2) \ar[r, "(\alpha_1)_*","\cong"'] 
&H_2(\overline{\nu}(F_1)),
\end{tikzcd}
\end{equation}
where $\widehat{f}$ is, at present, the identity map. As the overall composition \eqref{eq:sequence} is not the identity map, it must be multiplication by~$-1$. We wish to change the overall sign. For this, go back to the beginning of this proof and set~$f=\tau$ (see e.g.~\cite[Definition~4.6]{OrsonPowellSpines} for a definition). The extension~$\widehat{f}$ of $\tau$ to the disc bundles induces the map~$-1$ on~$H_2$; see e.g.~\cite[Lemma~4.7]{OrsonPowellSpines}. This changes the overall sign of the composition \eqref{eq:sequence}, so that the new $G_*$ is the identity map on $H_2$, as desired. This completes the proof of the theorem.

\end{proof}

\appendix

\section{$\Z$-spheres in~$\C P^2$ are unique}
\label{sec:Zspheres}

A locally flat $2$-sphere $F\subseteq \C P^2$ with~$\pi_1(\C P^2 \setminus F)\cong\Z$ is called a \emph{$\Z$-sphere}.

\begin{proposition}
\label{prop:0Class}
There is a unique~$\Z$-sphere in~$\C P^2$ up to ambient isotopy.
\end{proposition}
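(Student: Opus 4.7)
The plan is to invoke the Conway--Powell classification of locally flat $\Z$-surfaces~\cite[Theorem~1.4]{ConwayPowell}, which states that two locally flat $\Z$-surfaces of the same genus in a closed simply-connected $4$-manifold are ambient isotopic if and only if their equivariant intersection forms are isometric. Reduced to this theorem, the proposition splits into three verifications.

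First, any $\Z$-sphere $F\subseteq \C P^2$ is nullhomologous: the Mayer--Vietoris argument recorded in the proof of Theorem~\ref{thm:Classification} identifies $H_1(X)\cong \Z_{|[F]|}$ for the exterior $X=\C P^2\setminus\nu(F)$, so the hypothesis $\pi_1(X)\cong\Z$ forces $[F]=0$. For existence, I would take the standard unknotted $2$-sphere $F_0\subseteq S^4$, whose complement is $S^1\times D^3$ and hence has $\pi_1\cong\Z$, and use the identification $\C P^2\cong \C P^2\# S^4$ performed disjointly from $F_0$; a van Kampen argument then gives $\pi_1(\C P^2\setminus F_0)\cong \pi_1(\C P^2)\ast \pi_1(S^4\setminus F_0)\cong\Z$.

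The main task is to compute the equivariant intersection form of $X$ and show it is independent of the choice of $\Z$-sphere. Since $[\C P^1]\cdot[F]=0$, topological Whitney moves in the simply-connected $\C P^2$ can be used to push $\C P^1$ off $F$ and produce a locally flat $\C P^1\subseteq X$; this yields a class $e\in H_2(X)\cong\Z$ of self-intersection~$1$, which lifts to $\widetilde e\in H_2(\widetilde X)$. Combining the Serre spectral sequence of the fibration $\widetilde X\to X\to S^1$ with Poincar\'e--Lefschetz duality for the pair $(\widetilde X,\partial\widetilde X)$, whose boundary is $\partial\widetilde X\cong S^2\times\R$, I expect to conclude that $H_2(\widetilde X)\cong \Z[t^{\pm1}]$ is free of rank one, generated by $\widetilde e$, so that the equivariant intersection form is the Hermitian form $(1)$, independently of the sphere. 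Applying~\cite[Theorem~1.4]{ConwayPowell} then delivers ambient isotopy uniqueness.

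The main obstacle is ruling out nontrivial $\Z[t^{\pm1}]$-torsion in $H_2(\widetilde X)$: the spectral sequence, together with $H_\ast(X;\Z)\cong H_\ast(S^1\vee S^2)$, only determines the coinvariants $H_2(\widetilde X)_{\Z}\cong\Z$ and the invariants $H_2(\widetilde X)^{\Z}=0$, which is a priori consistent with additional torsion summands on which $t-1$ acts invertibly. Eliminating these requires a careful combination of Poincar\'e--Lefschetz duality on $\widetilde X$ and the long exact sequence of the pair $(\widetilde X,\partial\widetilde X)$ to rigidify the $\Z[t^{\pm1}]$-module structure.
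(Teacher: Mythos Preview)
Your overall strategy coincides with the paper's: compute the equivariant intersection form of the exterior and invoke~\cite[Theorem~1.4]{ConwayPowell}. Two points need tightening.

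First, the paper's route to~$H_2(\widetilde X)\cong\Z[t^{\pm1}]$ avoids your acknowledged torsion difficulty entirely. Rather than the Serre spectral sequence plus Poincar\'e--Lefschetz duality, it cites an Euler characteristic argument over~$\Z[\Z]$ (analogous to~\cite[Claim~4]{ConwayPowell}) to pin down the rank, and then uses that~$\partial X\cong S^1\times S^2$ has vanishing Alexander module and $\Z[\Z]$-torsion~$H_2$ to conclude that~$\lambda_X$ is nonsingular. The form is then identified as~$(1)$ not by exhibiting a lift of~$\C P^1$, but by the constraint that~$\lambda_X$ must augment to~$Q_{\C P^2}=(1)$: a nonsingular rank-one Hermitian form over~$\Z[\Z]$ is~$(\pm 1)$ up to isometry, and augmentation selects the sign. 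This bypasses the module-structure rigidification you flagged as the main obstacle.

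Second, your invocation of~\cite[Theorem~1.4]{ConwayPowell} is slightly too strong. The theorem does not assert ``isometric equivariant forms~$\Rightarrow$ ambient isotopy'' outright; part~(1) gives only an equivalence (a self-homeomorphism of~$\C P^2$ carrying~$F_0$ to~$F_1$), while part~(2) upgrades this to isotopy precisely when the chosen isometry~$\varphi$ induces the identity on~$H_2(\C P^2)$. The paper checks this explicitly: any isometry of the rank-one form is multiplication by~$\pm t^k$, hence induces~$\pm 1$ on~$H_2(\C P^2)$ after augmentation, and if one lands on~$-1$ one replaces~$\varphi$ by~$-\varphi$. This step is short but not automatic, and your proposal omits it.
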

\begin{proof}
The exterior~$X$ of a~$\Z$-sphere~$F \subseteq \C P^2$ must have~$\pi_2(X) \cong H_2(\widetilde{X})\cong \Z[\Z]$; the Euler characteristic argument needed to show this is analogous to the one in~\cite[Claim~4]{ConwayPowell}.
Since~$\partial X\cong S^1 \times S^2$ has vanishing Alexander module, and~$\Z[\Z]$-torsion~$H_2$ (see e.g.~\cite[Lemma 3.2]{ConwayPowell}), one deduces that the equivariant intersection form~$\lambda_X$ is nonsingular.
Since this form must augment to~$Q_{\C P^2}$~\cite[Lemma 5.10]{ConwayPowell}, it follows that~$\lambda_{X}$ is the rank~$1$ form given by~$\lambda_{X}(x,y)=x\overline{y}$.
Since~$\Z$-spheres are determined up to equivalence by the equivariant intersection form of their exteriors~\cite[Theorem 1.4 (1)]{ConwayPowell},  we deduce that for every pair of~$\Z$-spheres $F_0,F_1$, with respective exteriors $X_0,X_1$,  and every isometry~$\varphi \colon \lambda_{X_0} \cong \lambda_{X_1}$, there is a homeomorphism~$\Phi$ of~$\C P^2$ whose restriction to the exteriors induces~$\varphi$.
In particular,  the spheres $F_0$ and $F_1$ are equivalent.

It remains to upgrade this equivalence to an isotopy.
According to~\cite[Theorem 1.4~(2)]{ConwayPowell} any isometry~$\varphi \colon \lambda_{X_0} \cong \lambda_{X_1}$ induces an isomorphism~$\varphi_\Z \colon H_2(\C P^2) \to H_2(\C P^2)$ with the following property: the $\Z$-spheres~$F_0,F_1$ are isotopic if and only if~$\varphi_\Z=\Id$.
The isomorphism~$\varphi_\Z$ fits into the following diagram
$$
\xymatrix{
 H_2(X_0) \ar[r]^\cong \ar[d]^{\varphi \otimes_{\Z[\Z]} \Id_\Z}& H_2(\C P^2) \ar[d]^{\varphi_\Z}  \\
H_2(X_1) \ar[r]^\cong& H_2(\C P^2) 
}
$$
in which we are identifying $H_2(X_i)$ with $H_2(X_i;\Z[\Z]) \otimes_{\Z[\Z]} \Z_\varepsilon$ where $\Z_\varepsilon$ denotes $\Z$ with the~$\Z[\Z]$-module structure induced by $T \cdot n=n$ for every $n \in \Z$; see e.g.~\cite[Lemma 5.10]{ConwayPowell} with $g=0$.

In our case~$H_2(X_i;\Z[\Z])=\Z[\Z]$ and therefore any isometry~$\varphi$ is (with respect to some bases) multiplication by~$\pm t^k$ for some~$k \in \Z$.
The definition of the module structure on $\Z_\varepsilon$ implies that~$\varphi_\Z=\pm 1$.
As a consequence,  if we happen to pick an isometry~$\varphi$ with~$\varphi_\Z=-1$, then~$\varphi':=-\varphi$ will satisfy~$\varphi'_\Z=1$.
Applying~\cite[Theorem 1.4 (2)]{ConwayPowell} we deduce that~$F_0$ and~$F_1$ are isotopic.
\end{proof}

\def\MR#1{}
\bibliography{bibliosimpleCP2}
\end{document}